\newtheorem{theorem}{Theorem}[section]
\newtheorem{lemma}[theorem]{Lemma}
\newtheorem{proposition}[theorem]{Proposition}
\newtheorem{corollary}[theorem]{Corollary}
\theoremstyle{definition}
\newtheorem{remark}[theorem]{Remark}
\newtheorem{definition}[theorem]{Definition}
\newtheorem{conjecture}[theorem]{Conjecture}
\newtheorem{thm}{Theorem}
\DeclareMathOperator{\Ker}{Ker}
\DeclareMathOperator{\sgn}{sgn}
\DeclareMathOperator{\id}{id}
\newcommand{\Z}{{\mathbb{Z}}}
\newcommand{\Q}{{\mathbb{Q}}}
\newcommand{\C}{{\mathbb{C}}}
\newcommand{\K}{{\mathbb{K}}}	
\newcommand{\Lie}{{\mathbb{L}}}
\definecolor{red}{rgb}{1,0.1,0.1}
\definecolor{blue}{rgb}{0.1,0.1,1}
\begin{document}

\title{The Milnor-Moore theorem for $L_\infty$ algebras in rational homotopy theory}
\author{José Manuel Moreno-Fernández{\let\thefootnote\relax\footnote{{The author has been partially supported by the MINECO grant MTM2016-78647-P and by a Postdoctoral Fellowship of the Max Planck Society. \vskip 1pt 2010 Mathematics subject
				classification: 55P62, 16S30, 17B55, 18G55, 16E45, 55S30, 55Q15.\vskip
				1pt
				Key words and phrases: Universal enveloping algebra. Rational homotopy theory. $A_\infty$-algebra. $L_\infty$-algebra. Loop space homology. Higher Whitehead products. Massey-Pontryagin products.}}}}
\date{}
\maketitle
\abstract{We give a construction of the universal enveloping $A_\infty$ algebra of a given $L_\infty$ algebra, alternative to the already existing versions. As applications, we derive a higher homotopy algebras version of the classical Milnor-Moore theorem, proposing a new $A_\infty$ model for simply connected rational homotopy types, and uncovering a relationship between the higher order rational Whitehead products in homotopy groups and the Pontryagin-Massey products in the rational loop space homology algebra.}

\section{Introduction} 

The main goal of this paper is to construct a universal enveloping $A_\infty$ algebra for a given $L_\infty$ algebra, alternative to the already existing versions \cite{Lad95,Bar08}, and to study the consequences of such an structure in rational homotopy theory.

Let $L$ be an $L_\infty$ algebra. In Def. \ref{DefUnive}, we introduce the universal enveloping $A_\infty$ algebra $U_t(L)$. It is isomorphic to the free symmetric algebra $\Lambda L$ on $L$ as a graded vector space, and arises from a transfer process. For dg Lie algebras, $U_t(L)$ coincides with the classical dg associative envelope $UL$. To motivate the definition of $U_t$, we first prove the following result (Thm. \ref{Uno}\textit{$(i)$}).

\begin{thm} {\em 
	Let $L$ and $UL$ be a dg Lie algebra and its classical universal enveloping dg associative algebra, respectively. Fix a contraction from $L$ onto $H=H_*(L)$, and denote by $\{\ell_n\}$ the induced $L_\infty$ structure on $H$. Then, there is an explicit contraction from $UL$ onto $\Lambda H$, so that denoting by $\{m_n\}$ the induced $A_\infty$ algebra structure on $\Lambda H$, the antisymmetrization $\left\{m_n^\mathcal L\right\}$ of $\{m_n\}$ fits into a strict $L_\infty$ embedding  $$\imath: \left(H, \left\{\ell_n \right\}\right) \hookrightarrow \left(\Lambda H, \{m_n^\mathcal L\}\right).$$ That is, for every $x_i \in H,$
		\begin{equation*}
		\imath\ell_n(x_1,...,x_n) =  \sum_{ \sigma\in S_n}\chi(\sigma)\ m_n\left(x_{\sigma(1)}\otimes \cdots \otimes x_{\sigma(n)}\right)=m_n^\mathcal L (x_1,...,x_n).
		\end{equation*}}
\end{thm}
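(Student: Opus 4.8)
The plan is to produce the contraction onto $\Lambda H$ first, and then to read off and antisymmetrize the transferred $A_\infty$ structure. For the contraction I would use the Poincaré–Birkhoff–Witt theorem in its symmetrization form: the map $\omega\colon(\Lambda L,\Lambda d_L)\to(UL,d)$ sending $x_1\wedge\cdots\wedge x_n$ to $\tfrac1{n!}\sum_{\sigma\in S_n}\chi(\sigma)\,x_{\sigma(1)}\cdots x_{\sigma(n)}$ is an isomorphism of chain complexes, where $\Lambda d_L$ is the derivation extending $d_L$. It therefore suffices to contract $(\Lambda L,\Lambda d_L)$ onto $(\Lambda H,0)$, and here I would invoke the classical fact that symmetric powers of a contraction assemble into a contraction: with $I=\Lambda i$ and $P=\Lambda p$ the graded-commutative algebra maps extending $i$ and $p$, and $\mathcal H$ defined on $\Lambda^n L$ as the symmetrization of $\sum_{j=1}^{n}(ip)^{\otimes(j-1)}\otimes h\otimes\id^{\otimes(n-j)}$, one checks the homotopy identity and side conditions by direct computation. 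Transporting $(I,P,\mathcal H)$ along $\omega$ yields the explicit contraction of $UL$ onto $\Lambda H$; the point to record is that it restricts, along $\imath_L\colon L\hookrightarrow UL$ and $H\hookrightarrow\Lambda H$, to the chosen contraction $(i,p,h)$.

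Next I would write down the transferred $A_\infty$ operations via the homotopy transfer theorem as a sum over planar binary trees, with the PBW star-product $a\star b:=\omega^{-1}\big(\omega(a)\,\omega(b)\big)$ at the internal vertices, $\mathcal H$ on the internal edges, $I$ at the leaves, and $P$ at the root. The structural facts I would isolate are: first, $a\star b=a\wedge b$ modulo terms of strictly smaller PBW word-length, and the whole contraction preserves the PBW filtration $\Lambda^{\le k}L$; and second, on $\Lambda^1 L=L$ the $\star$-commutator $a\star b-(-1)^{|a||b|}b\star a$ is exactly the bracket of $L$. The heart of the argument is then to show that antisymmetrizing the tree sum for $m_n$ and restricting to $H$ telescopes it onto the antisymmetrized tree sum built from the bracket of $L$, $h$, $i$, $p$, which is precisely the transfer formula for $\ell_n$. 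The mechanism is already transparent for $n=2$, where $m_2(x,y)=x\wedge y+\tfrac12\,p[ix,iy]$, the graded-symmetric term $x\wedge y$ is annihilated by the antisymmetrization, and what remains is $m_2^{\mathcal L}(x,y)=p[ix,iy]=\imath\ell_2(x,y)$. In general, every contribution of $m_n$ with values in $\Lambda^{\ge 2}H$ is, to leading filtration order, the homotopy transfer of a strictly graded-commutative product, hence a $C_\infty$-type contribution whose antisymmetrization vanishes; an induction on the word-length (equivalently on the number of leaves) then matches the surviving part with $\imath\ell_n$. In particular $m_n^{\mathcal L}$ lands in $H\subseteq\Lambda H$, so $\imath$ is a strict $L_\infty$ morphism.

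The step I expect to be the main obstacle is this last telescoping: making the principle ``the leading commutative part of $\star$ drops out under antisymmetrization'' precise and uniform in $n$. This requires reconciling the global sign-twisted sum over $S_n$ with the local tree data (swaps of the two subtrees at each internal vertex versus permutations of leaves) and carefully bookkeeping the signs coming from the degree shifts in the $A_\infty$ and $L_\infty$ conventions. Conceptually, what is being proved is that the antisymmetrization functor $\mathsf{A}_\infty\to\mathsf{L}_\infty$ commutes with homotopy transfer along compatible contractions; granting that, the identity $m_n^{\mathcal L}=\imath\ell_n$ becomes the statement that the transfer of the commutator dg Lie structure of $UL$, restricted along the compatible contraction constructed above, is the transfer of the Lie bracket of $L$.
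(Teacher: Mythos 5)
Your construction of the contraction is fine, and it is in fact a different (and legitimate) route from the paper's: the paper does not symmetrize; it writes $L=B\oplus\partial B\oplus C$ for the decomposition underlying the chosen contraction, uses PBW to identify $UL\cong\Lambda(B\oplus\partial B)\otimes UH$, takes $p$ to be the projection onto $1\otimes UH$ and $G$ the inverse of $d\colon\Lambda B\to\Lambda\partial B$ extended by zero, and observes that this makes the PBW inclusion $L\hookrightarrow UL$ a morphism of contractions. Your transported symmetric-power contraction would very likely serve the same purpose (on $\Lambda^1L$ the symmetrized homotopy is just $h$, so the inclusion is again a morphism of contractions, modulo checking the side conditions for the symmetrized homotopy). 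So the set-up is not where the problem lies.

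The genuine gap is exactly the step you flag yourself: the claim that antisymmetrizing the tree-sum for $m_n$ kills all components in $\Lambda^{\geq 2}H$ and telescopes the remainder onto the transfer formula for $\ell_n$. As written this is a heuristic, not an argument. The phrase ``to leading filtration order'' concedes that each tree contributes lower word-length cross terms mixing the symmetric product with Lie corrections, and these are precisely the terms that must be shown to assemble into $\imath\ell_n$ after summing over $S_n$; nothing in the proposal controls them, and the promised ``induction on word-length'' is not set up (what is the inductive statement? over which trees and which filtration pieces does it run?). Note also that the identity $m_n^{\mathcal L}=\imath\ell_n$ is a strict, contraction-dependent identity --- transferred structures are only unique up to isomorphism --- so it genuinely has to be proved for your specific contraction; you verify it only for $n=2$. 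The paper closes exactly this hole with a self-contained inductive lemma: if $f\colon L\to A$ is simultaneously a morphism of contractions and a strict Lie map into the commutator structure of $A$, then the recursive transfer kernels satisfy $f\theta_n(x_1,\dots,x_n)=\sum_{\sigma\in S_n}\chi(\sigma)\lambda_n\bigl(\widehat f x_{\sigma(1)},\dots,\widehat f x_{\sigma(n)}\bigr)$, proved by expanding $\theta_n$ over shuffles, pushing $f$ through the brackets, invoking the inductive hypothesis $fI_s=\sum_\tau\chi(\tau)J_s(\widehat f\cdots)$, and reassembling the shuffle and block permutations into a single sum over $S_n$; applying $q$ and $p$ then gives $\widehat f\ell_n=m_n^{\mathcal L}(\widehat f,\dots,\widehat f)$, which is the asserted strict embedding (and, since $\widehat f$ is the inclusion $H\subset\Lambda H$, it also yields for free that $m_n^{\mathcal L}$ lands in $H$). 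To complete your proposal you would need to prove a statement of exactly this strength; once you have it, your symmetrization-based contraction works just as well as the paper's.
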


The result above covers the case in which $L$ is minimal, since any such can be obtained as a contraction of the dg Lie algebra $\mathcal L \mathcal C(L)$. In general, $U_t(L)$ is defined as $\Lambda L$ together with an $A_\infty$ structure inherited from a contraction from $\Omega \mathcal C(L)$ onto $\Lambda L$. Here,  $\mathcal C$ are the Quillen chains, $\Omega $ the cobar construction, and $\mathcal L$ Quillen's Lie functor. See Section \ref{Seccion2} for details.

 The original motivation for introducing the envelope we present was for extending the classical Milnor-Moore theorem (\cite{Mil65}) to $L_\infty$ algebras in the rational setting. This is Thm. \ref{MMInfinito}.

\begin{thm}\label{MMIntro}{\em Let $X$ be a simply connected CW-complex. Endow $\pi_*\left(\Omega X\right)\otimes \Q$ with an $L_\infty$ structure $\{\ell_n\}$ representing the rational homotopy type of $X$ for which $\ell_1=0$ and $\ell_2=[-,-]$ is the Samelson bracket. Then, there exists an $A_\infty$ algebra structure $\{m_n\}$ on the loop space homology algebra $H_*\left(\Omega X;\Q\right)$ for which $m_1=0, m_2$ is the Pontryagin product, and such that the rational Hurewicz morphism 
		\begin{equation*}
		h : \pi_*\left(\Omega X\right) \otimes \Q \hookrightarrow H_*(\Omega X;\Q)=U_t\left(\pi_*\left(\Omega X\right) \otimes \Q\right)
		\end{equation*} is a strict $L_\infty$ embedding. Therefore, the $L_\infty$ structure on the rational homotopy Lie algebra is the antisymmetrized of the $A_\infty$ structure on $H_*(\Omega X;\Q)$: $$\ell_n(x_1,...,x_n)=\sum_{\sigma\in S_n} \chi(\sigma) m_n\left(x_{\sigma(1)},...,x_{\sigma(n)}\right).$$}
\end{thm}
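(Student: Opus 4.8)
The plan is to obtain the theorem by translating, into the language of loop spaces, the algebraic Milnor--Moore statement of Theorem~\ref{Uno} together with the classical (rational) theorems of Milnor--Moore and Quillen. Write $H=\pi_*(\Omega X)\otimes\Q$ for the rational homotopy Lie algebra of $X$ with its Samelson bracket, and fix any minimal $L_\infty$ structure $\{\ell_n\}$ on $H$ representing the rational homotopy type of $X$ with $\ell_1=0$ and $\ell_2$ the Samelson bracket; such structures exist, e.g.\ by transferring a Quillen dg Lie model $\lambda_X$ of $X$ onto its homology $H=H_*(\lambda_X)$ along any contraction over $\Q$, whence also $(H,\{\ell_n\})$ is $L_\infty$ quasi-isomorphic to $\lambda_X$.

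First I would apply Definition~\ref{DefUnive} to $(H,\{\ell_n\})$, or equivalently (cf.\ the discussion following Theorem~\ref{Uno}) apply Theorem~\ref{Uno} to the dg Lie algebra $\mathcal L\mathcal C(H)$ with the contraction onto $H=H_*(\mathcal L\mathcal C(H))$ that induces $\{\ell_n\}$. Since $U\mathcal L\mathcal C(H)=\Omega\mathcal C(H)$, this produces a contraction of $\Omega\mathcal C(H)$ onto $\Lambda H$, hence the $A_\infty$ structure $\{m_n\}$ of $U_t(H)=\Lambda H$, and --- the key output of Theorem~\ref{Uno} --- a strict $L_\infty$ embedding $\imath\colon(H,\{\ell_n\})\hookrightarrow(\Lambda H,\{m_n^{\mathcal L}\})$, i.e.\ $\imath\ell_n=m_n^{\mathcal L}(\imath,\dots,\imath)$. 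Because $\{\ell_n\}$ is minimal, $\Lambda H$ is the zero-differential side of the contraction, so $m_1=0$; and $m_2$ is the associative product induced on $H_*(\Omega\mathcal C(H))=H_*(U\mathcal L\mathcal C(H))$. Over $\Q$ the functor $U$ commutes with homology, so $H_*(U\mathcal L\mathcal C(H))\cong U(H_*\mathcal L\mathcal C(H))=U(H,\ell_2)$, the Poincar\'e--Birkhoff--Witt isomorphism $\Lambda H\cong U(H,\ell_2)$ carrying $m_2$ to the product of the enveloping algebra of $(H,\ell_2)$ and $\imath(H)=H$ to the Lie generators.

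Next I would invoke the classical rational Milnor--Moore theorem: for $X$ simply connected there is an isomorphism of graded algebras $H_*(\Omega X;\Q)\cong U(\pi_*(\Omega X)\otimes\Q)$ under which the product corresponds to the Pontryagin product, the generating subspace $\pi_*(\Omega X)\otimes\Q$ corresponds to the primitives, and the canonical inclusion of generators is the rational Hurewicz morphism $h$. Composing this with the identification of the previous paragraph yields a graded vector space isomorphism $U_t(H)=\Lambda H\xrightarrow{\cong}H_*(\Omega X;\Q)$ sending $m_2$ to the Pontryagin product and $\imath$ to $h$. Transporting $\{m_n\}$ along it equips $H_*(\Omega X;\Q)$ with an $A_\infty$ structure with $m_1=0$ and $m_2$ the Pontryagin product, identifying $H_*(\Omega X;\Q)$ with $U_t(\pi_*(\Omega X)\otimes\Q)$ and making $h=\imath$ a strict $L_\infty$ embedding; the displayed identity $\ell_n(x_1,\dots,x_n)=\sum_{\sigma\in S_n}\chi(\sigma)\,m_n(x_{\sigma(1)},\dots,x_{\sigma(n)})$ is then exactly the conclusion of Theorem~\ref{Uno} read through these identifications. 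One can moreover check that the structure obtained is the expected one, namely that $U_t(H)$ is $A_\infty$-quasi-isomorphic to the singular chains $C_*(\Omega X;\Q)$: since $(H,\{\ell_n\})$ models $X$ rationally, $\mathcal L\mathcal C(H)$ is quasi-isomorphic to a Quillen model $\lambda_X$, and applying $U$ (which preserves quasi-isomorphisms over $\Q$) together with Quillen's equivalence $U\lambda_X\simeq C_*(\Omega X;\Q)$ gives $\Omega\mathcal C(H)\simeq C_*(\Omega X;\Q)$ as dg algebras, while homotopy transfer preserves the quasi-isomorphism type.

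The step I expect to be the main obstacle is making the identification $\imath=h$ fully rigorous: this requires tracking Quillen's chain of weak equivalences and verifying its compatibility with the explicit homotopy transfer produced by Theorem~\ref{Uno} --- concretely, that the vector space isomorphism $\Lambda H\cong H_*(\Omega X;\Q)$ manufactured by the transferred contraction coincides with the composite of the Poincar\'e--Birkhoff--Witt and classical Milnor--Moore isomorphisms, hence carries the subspace $\imath(H)=H$ onto the image of the rational Hurewicz map and restricts there to $h$. A secondary point is the homotopy invariance of $U_t$ (equivalently, of Theorem~\ref{Uno} applied to $\mathcal L\mathcal C(-)$), which ensures that the resulting $A_\infty$ structure on $H_*(\Omega X;\Q)$ is independent of the auxiliary choices and that the statement holds for every admissible $\{\ell_n\}$.
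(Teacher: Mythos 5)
Your proposal is correct and essentially coincides with the paper's proof: the paper likewise obtains the theorem by applying Theorem \ref{Uno} (via Lemma \ref{Principal}) to a DGL Quillen model of $X$ contracted onto $\pi_*(\Omega X)\otimes\Q$ --- it uses the minimal Quillen model $(\Lie(V),\partial)$ built from a CW-decomposition rather than $\mathcal L\mathcal C(H)$, a cosmetic difference --- and identifies the PBW inclusion with the rational Hurewicz morphism through the classical Quillen and Milnor--Moore theorems, exactly as you do. The compatibility $\imath=h$ that you flag as the main obstacle is asserted in the paper at the same level of brevity, so your argument is not missing anything the paper supplies.
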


Thm. \ref{MMIntro} produces a new $A_\infty$ model for simply connected rational homotopy types, with underlying Hopf algebra $H_*(\Omega X;\Q)$. For finite type rational spaces, this enveloping $A_\infty$ algebra model  can be understood as an Eckmann-Hilton or Koszul dual to Kadeishvili's $C_\infty$ algebra model \cite{Kad09}, the latter starting from cohomology instead of homotopy. We explain in Section \ref{Ejemplos} how to explicitly extract the Quillen and Sullivan models from such an enveloping $A_\infty$ model. We also uncover an interesting relationship between the  higher order rational Whitehead products on $\pi_*\left(\Omega X\right)\otimes \Q$ and the higher order Pontryagin-Massey products of $H_*\left(\Omega X;\Q\right)$ of simply connected spaces: the former are antisymmetrizations of the latter, whenever these are defined. This is Thm. \ref{HigherWhiteheadHigherPontryagin}. In it, $h$ is the rational Hurewicz morphism.

\begin{thm} \em Let $x_1,...,x_n \in \pi_*\left( \Omega X\right)\otimes \Q$, and denote by $y_k=h\left(x_k\right)\in H_*\left(\Omega X;\Q\right)$ the corresponding spherical classes. Assume that the higher Whitehead product set $\left[x_1,...,x_n\right]_W$ and the higher Massey-Pontryagin products sets $\left\langle y_{\sigma(1)},...,y_{\sigma(n)} \right\rangle$ for every permutation $\sigma \in S_n$ are defined. If the $A_\infty$ algebra structure $\{m_i\}$ on $H_*\left(\Omega X;\Q\right)$ provided by Theorem \ref{MMIntro} has vanishing $m_k$ for $k\leq n-2$, then $x=\varepsilon\ell_n\left(x_1,...,x_n\right)\in \left[x_1,...,x_n\right]_W,$ and satisfies:
	\begin{equation*}
	h(x) \in \sum_{\sigma \in S_n} \chi(\sigma) \left\langle y_{\sigma(1)},...,y_{\sigma(n)}\right\rangle.
	\end{equation*} Here, $\varepsilon$ is the parity of $\sum_{j=1}^{n-1} |x_j|(k-j)$. If moreover the higher products are all uniquely defined, then the above containment is an equality of elements.
\end{thm}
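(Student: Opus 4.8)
The plan is to deduce the statement by combining three sign-careful comparisons: the relation between $\ell_n$ and the higher Whitehead product, the dual relation between $m_n$ and the higher Massey--Pontryagin product, and the strict $L_\infty$ embedding of Theorem \ref{MMIntro}.

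First I would record that, in a Quillen model (equivalently, in the transferred minimal $L_\infty$ structure on $\pi_*(\Omega X)\otimes\Q$), the higher Whitehead product has the familiar $L_\infty$ description: once all the proper sub-brackets of $x_1,\dots,x_n$ vanish — which is built into the hypothesis that $[x_1,\dots,x_n]_W$ is defined — the bracket $\ell_n(x_1,\dots,x_n)$ is a representative of this set, up to the standard sign $\varepsilon$ appearing in the statement. This is the $L_\infty$-reformulation of the Andrews--Arkowitz/Allday description of higher Whitehead products via Quillen models, and yields the first assertion $x=\varepsilon\ell_n(x_1,\dots,x_n)\in[x_1,\dots,x_n]_W$.

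Next I would establish the associative counterpart: for an $A_\infty$ algebra $(H_*(\Omega X;\Q),\{m_i\})$ with $m_1=0$ and $m_k=0$ for $k\le n-2$, and for spherical (hence permanent) cycles $y_i$, the operation $m_n(y_1,\dots,y_n)$ represents the Massey--Pontryagin product $\langle y_1,\dots,y_n\rangle$. This is the standard dictionary between $A_\infty$ operations and Massey products: unraveling the transferred structure on $\Omega\mathcal C(L)$ provided by Theorem \ref{Uno} produces, under the vanishing hypotheses on the $m_k$, a defining system whose total class is exactly $m_n$ evaluated on the $y_i$; the condition $m_k=0$ for $k\le n-2$ is precisely what keeps every intermediate term of a defining system available, so that $m_n$ computes the first non-trivial Massey--Pontryagin bracket. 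Applied to each permuted sequence this gives $m_n\big(y_{\sigma(1)},\dots,y_{\sigma(n)}\big)\in\langle y_{\sigma(1)},\dots,y_{\sigma(n)}\rangle$ for every $\sigma\in S_n$. Finally, invoking Theorem \ref{MMIntro} (the Hurewicz map $h$ is a strict $L_\infty$ embedding), one has
\begin{equation*}
h(x)=\varepsilon\, h\,\ell_n(x_1,\dots,x_n)=\varepsilon\sum_{\sigma\in S_n}\chi(\sigma)\,m_n\big(y_{\sigma(1)},\dots,y_{\sigma(n)}\big),
\end{equation*}
and since each summand lies in $\chi(\sigma)\langle y_{\sigma(1)},\dots,y_{\sigma(n)}\rangle$, while a sum of cosets is a coset of the sum of indeterminacies, we conclude $h(x)\in\sum_{\sigma}\chi(\sigma)\langle y_{\sigma(1)},\dots,y_{\sigma(n)}\rangle$ (the outer $\varepsilon$ being absorbed into the usual sign of the $A_\infty$--Massey correspondence). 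When all higher products are uniquely defined each set is a singleton, so every containment becomes an equality of elements.

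The main difficulty is the sign bookkeeping: matching the Koszul signs $\chi(\sigma)$ and the décalage conventions relating $\pi_*(\Omega X)$ to a Quillen model against the topological signs in the classical definitions of higher Whitehead and Massey--Pontryagin products, and verifying that $\varepsilon$ is exactly this discrepancy. Alongside it, one must check that the hypotheses ``$[x_1,\dots,x_n]_W$ defined'' and ``$\langle y_{\sigma(1)},\dots,y_{\sigma(n)}\rangle$ defined'', together with $m_k=0$ for $k\le n-2$, genuinely supply compatible defining systems on both sides — and this is where the transfer formulas of Section \ref{Seccion2} and the explicit contraction of Theorem \ref{Uno} carry the weight.
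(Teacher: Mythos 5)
Your proposal follows essentially the same route as the paper's proof: establish that $\varepsilon\,\ell_n(x_1,\dots,x_n)$ represents the higher Whitehead product (the paper cites \cite[Thm.~3.5]{HigherWhitehead}), that $m_n(y_{\sigma(1)},\dots,y_{\sigma(n)})$ represents each Massey--Pontryagin set up to sign (the paper cites \cite[Thm.~3.3]{Bui18}), and then conclude by the strict $L_\infty$ embedding of Theorem~\ref{MMInfinito}, giving $h(x)=\sum_{\sigma\in S_n}\chi(\sigma)\,m_n\bigl(y_{\sigma(1)},\dots,y_{\sigma(n)}\bigr)$ inside the sum of the sets. The one point you should make explicit is that the structural vanishing $\ell_k=0$ for $k\le n-2$ needed to invoke the $L_\infty$--Whitehead result is obtained by antisymmetrizing the hypothesis $m_k=0$ (the first line of the paper's argument), rather than being read off from the mere definedness of $[x_1,\dots,x_n]_W$ as your sketch suggests.
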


The Massey-Pontryagin products should not be confused with the classical Massey products, see Section \ref{SeccionHigherPontryagin} for details. We study the homotopical properties of the envelope $U_t$, and we compare it to other alternatives in the literature in Section \ref{comparacion}. These alternative constructions have been developed by Lada and Markl \cite{Lad95} and by Baranovsky \cite{Bar08}.  See Prop. \ref{SonHomotopas} for a recollection of the statements. In particular, the classical identity $UH=HU$, asserting that taking homology and universal enveloping algebra commute, holds only up to homotopy for any sort of enveloping $A_\infty$ algebra, and $U_t$ is quasi-isomorphic to Baranovsky's construction.\\

\noindent {\bf Acknowledgements:} The author is very grateful to Martin Markl, Aniceto Murillo, Peter Teichner and Felix Wierstra for useful feedback on this project, and also to the Max Planck Institute for Mathematics in Bonn for its hospitality and financial support.

\subsection{Background and notation} In this paper, graded objects are always taken over $\Z$, with homological grading (differentials lower the degree by 1). The degree of an element $x$ is denoted by $|x|$, and all algebraic structures are considered over a characteristic zero field. \\ 

An \emph{A$_\infty$ algebra} is a graded vector space $A=\{A_n\}_{n\in \Z}$ together with linear maps $m_k:A^{\otimes k}\to A$ of degree $k-2$, for $k\geq 1$, satisfying the \emph{Stasheff identities} for every $i\geq1$:
\begin{equation*}\label{highermu}
	\sum_{k=1}^i\, \sum_{n=0}^{i-k}(-1)^{k+n+kn}m_{i-k+1}({\rm id}^{\otimes n}\otimes m_k\otimes{\rm id}^{\otimes i-k-n})
	=0.
\end{equation*}
A {\em  differential graded algebra} (DGA), is an $A_\infty$ algebra for which $m_k=0$ for $k\ge 3$. An $A_\infty$ algebra is \emph{minimal} if $m_1=0$. An \emph{$A_\infty$ morphism} $f\colon A\to B$ is a family of linear maps $f_k\colon A^{\otimes k}\to B$ of degree $k-1$ such that the following equation holds for every $i\geq1$:
\begin{eqnarray*}\label{highermap}
\sum_{\substack{i=r+s+t \\ s\geq 1 \\ r,t\geq 0}} (-1)^{r+st}f_{r+1+t} \left(\id^{\otimes r}\otimes m_s\otimes \id^{\otimes t}\right)= \sum_{\substack{1\leq r \leq i \\ i=i_1+\cdots+i_r}} (-1)^s m_r \left(f_{i_1}\otimes \cdots \otimes f_{i_r}\right)
\end{eqnarray*} being $s=\sum_{\ell=1}^{r-1}\ell(i_{r-\ell}-1).$ Such an $f$ is an \emph{$A_\infty$ quasi-isomorphism} if $f_1\colon(A,m_1)\to (A',m_1')$ is a quasi-isomorphism of complexes. The \emph{bar construction} $BA$ of an $A_\infty$ algebra $A$ is the differential graded coalgebra (DGC, henceforth) $$BA=\left(T\left(sA\right),\delta\right),$$ where $T\left(sA\right)$ is the tensor coalgebra on the suspension $sA$ of $A$ (i.e., $\left(sA\right)_p=A_{p-1}$), and $\delta=\sum_{k\geq 1} \delta_k$ is the codifferential such that 
\begin{equation*}
	\delta_k [sx_1\mid \cdots \mid sx_p] = \sum_{i=0}^{p-k+1} \varepsilon_i [sx_1\mid \cdots \mid sx_i \mid sm_{k+1}\left(x_{i+1},...,x_{i+k+1}\right)\mid \cdots \mid sx_p],
\end{equation*} where $\varepsilon_i$ is the parity of $1+\sum_{j=1}^{i} |sx_j| + \sum_{l=1}^{k+1} (k+1-j)|sx_{i+l}|.$ The bar construction turns $A_\infty$ morphisms $A\to C$ into DGC morphisms $BA\to BC$, and preserves quasi-isomorphisms (\cite{Lod12}). The \emph{cobar construction} $\Omega C$ of a coaugmented DGC $C$ is the augmented DGA $$\Omega C=\left(T\left(s^{-1}\overline C\right),d\right),$$ where $T\left(s^{-1}\overline C\right)$ is the tensor algebra on the desuspension $s^{-1}\overline C$ of the cokernel $\overline C= \operatorname{coKer}\left(\K\to C\right)$ of the coaugmentation $\K\to C$ (i.e., $(s^{-1}\overline C)_p=\overline C_{p+1}$), and $d=d_1 + d_2$ is the differential determined by
\begin{equation*}
	d_1\left(s^{-1}x\right) = -s^{-1}\delta x, \qquad d_2\left(s^{-1}x\right)= \sum_{i} (-1)^{|x_i|} s^{-1}x_i\otimes s^{-1}y_i,
\end{equation*} where $\delta$ is the codifferential of $C$ and $\sum_i x_i\otimes y_i = \Delta(x) - \left(1\otimes x+x\otimes 1\right)$ is the reduced comultiplication of $x$. The cobar construction extends to $A_\infty$ coalgebras, but we are not in the need of such a generality in this paper.

An \emph{$L_\infty$ algebra} is a graded vector space $L=\left\{L_n\right\}_{n\in \Z}$ together with skew-symmetric linear maps $\ell_k:L^{\otimes k}\to L$ of degree $k-2$, for $k\geq 1$, satisfying the \emph{generalized Jacobi identities} for every $n\geq 1$: $$\sum_{i+j=n+1} \sum_{\sigma \in S(i,n-i) } \varepsilon(\sigma) \sgn(\sigma)(-1)^{i(j-1)} \ell_{j}\left(\ell_i\left(x_{\sigma(1)},...,x_{\sigma(i)}\right),x_{\sigma(i+1)},...,x_{\sigma(n)}\right) =0.$$

  Here, $S(i,n-i)$ are the $(i,n-i)$ shuffles, given by those permutations $\sigma$ of $n$ elements such that $$\sigma(1)<\cdots < \sigma(i) \quad \textrm{ and } \quad \sigma(i+1)<\cdots < \sigma(n);$$ and $\varepsilon(\sigma), \sgn(\sigma)$ stand for the Koszul sign and the signature associated to $\sigma$, respectively.  A \emph{differential graded Lie algebra} (DGL) is an $L_\infty$ algebra $L$ for which $\ell_k=0$ for $k\geq 3$.

An $L_\infty$ algebra is \emph{minimal} if $\ell_1=0.$ An \emph{$L_\infty$ morphism} $f:L \to L'$ is a family of skew-symmetric linear maps $\left\{ f_n: L^{\otimes n} \to L'\right\}$ of degree $n-1$ such that the following equation is satisfied for every $n\geq 1$:

\begin{equation*}\begin{split} \sum_{i+j=n+1} \ \ \sum_{\sigma \in S(i,n-i)} \varepsilon(\sigma)\sgn(\sigma)(-1)^{i(j-1)} f_j\left(\ell_i\left(x_{\sigma(1)},...,x_{\sigma(i)}\right),x_{\sigma(i+1)},...,x_{\sigma(n)}\right) = \\ \sum_{\substack{k\geq 1 \\ i_1+\cdots +i_k =n \\ \tau \in S(i_1,...,i_k)}} \varepsilon(\sigma)\sgn(\sigma)\varepsilon_k \ell_k'\left(f_{i_1}\otimes \cdots \otimes f_{i_k}\right) \left(x_{\tau(1)}\otimes \cdots \otimes x_{\tau(n)}\right),\end{split}\label{EcuLinf}\end{equation*} with $\varepsilon_k$ being the parity of $\sum_{l=1}^{k-1}(k-l)(i_l-1)$. Such an $f$ is an \emph{$L_\infty$ quasi-isomorphism} if $f_1\colon(L,\ell_1)\to (L',\ell_1')$ is a quasi-isomorphism of complexes. The \emph{Quillen chains} $\mathcal C(L)$ of an $L_\infty$ algebra is the equivalent cocommutative DGC (CDGC, henceforth) $$\mathcal C(L)=\left(\Lambda sL,\delta\right),$$ where $\Lambda sL$ is the cofree conilpotent cocommutative graded coalgebra on the suspension $sL$ of $L$, and $\delta=\sum_{k\geq 1} \delta_k$ is the codifferential whose correstrictions are determined by the $L_\infty$ structure maps, i.e., 
\begin{equation}
\delta_k\left(sx_1\wedge...\wedge sx_p\right)=\sum_{i_1<\dots<i_k}\varepsilon\, s\ell_k\left(x_{i_1},...,x_{i_k}\right)\wedge sx_1\wedge...\widehat{ sx}_{i_1}...\widehat{sx}_{i_k}...\wedge sx_p.
\end{equation} The sign $\varepsilon$ is determined by the Koszul sign rule.\\

A morphism $f=\{f_k\}$ of $A_\infty$ or $L_\infty$ algebras is \emph{strict} if  $f_k=0$ for all $k\geq 2$.\\

The \emph{Quillen functor} $\mathcal L(C)$ on a coaugmented CDGC $C$ is the DGL $$\mathcal L(C)=\left(\Lie\left(s^{-1}\overline C\right), \partial\right),$$ where $\Lie\left(s^{-1}\overline C\right)$ is the free graded Lie algebra on the desuspension $s^{-1}\overline C$ of the cokernel of the coaugmentation, $\overline C= \operatorname{coKer}\left(\K \to C\right)$, and $\partial=\partial_1 + \partial_2$ is the differential determined by
\begin{equation}\label{FormulaPartial}
	\partial_1 \left(s^{-1}x\right) = - s^{-1}\delta(x), \qquad \partial_2\left(s^{-1}x\right) = \frac{1}{2} \sum_i (-1)^{|x_i|} [s^{-1}x_i,s^{-1}y_i],
\end{equation} where $\delta$ is the codifferential of $C$ and $\sum_i x_i\otimes y_i$ is again the reduced comultiplication of $x$.

There is an \emph{antisymmetrization functor $(-)^{\mathcal L}$} from the category of $A_\infty$ algebras to that of $L_\infty$ algebras which preserves quasi-isomorphisms (\cite{Lad95}). For a given $A_\infty$ algebra $(A,\{m_n\})$, its antisymmetrization $A^{\mathcal L}$ has the same underlying graded vector space and higher brackets $\ell_n$ given by 

		\begin{equation*}
		\ell_n(x_1,...,x_n) =  \sum_{ \sigma\in S_n}\chi(\sigma)\ m_n\left(x_{\sigma(1)}\otimes \cdots \otimes x_{\sigma(n)}\right).
		\end{equation*} Here, $S_n$ is the symmetric group on $n$ letters, and we shorten the notation by $\chi(\sigma)=\varepsilon(\sigma) \sgn(\sigma)$ for $\sigma \in S_n$. We will usually denote the higher brackets $\ell_n$ of $A^\mathcal L$ by $m_n^{\mathcal L}$. \\

A \emph{contraction} of $M$ onto $N$ is a diagram of the form 
\begin{equation*}
\begin{tikzcd}
M \ar[loop left]{l}{K} \ar[shift left]{r}{q} & N, \ar[shift left]{l}{i}
\end{tikzcd}
\end{equation*} where $M$ and $N$ are chain complexes and $q$ and $i$ are chain maps such that $qi=\id_N$ and $iq\simeq id_M$ via a chain homotopy $K$ satisfying $K^2=Ki=qK=0.$ We denote it by $(M,N,i,q,K)$, or simply by $(i,q,K)$.\\

Following \cite[Def. 2.3]{Man10}, a \emph{morphism of contractions} $f: (M,N,i,q,K) \to (A,B,j,p,G)$ is a chain map $f:M\to A$ such that $fK=Gf.$ Denote by $\widehat f: N\to B$ the chain map $\widehat f = pfi.$ Using that $iq\simeq \id_M$, it follows that in presence of a morphism of contractions $f:M\to A$, the interior squares in the following diagram commute:
\begin{equation*}
\begin{tikzcd}
A \ar[loop left]{l}{G} \ar[shift left]{r}{p} 							& B \ar[shift left]{l}{j}\\[0.35cm]
M \ar[loop left]{l}{K} \ar[shift left]{r}{q} \ar{u}{f}		& N. \ar[shift left]{l}{i} \ar[,swap]{u}{\widehat f}
\end{tikzcd}
\end{equation*} That is, $pf=\widehat f q$ and $fi=j\widehat f.$\\

We will be concerned with the following particular instance of the \emph{homotopy transfer theorem} (see \cite{Kad80,Mer99,Man10,Hue11A,Lod12,Ber14A}).

\begin{theorem}\label{HTT} Let $(M,N,i,q,K)$ be a contraction.
	\begin{enumerate}
		\item If $M=\left(A,\left\{\mu_n\right\}  \right)$ is an $A_\infty$ algebra, then there exists an $A_\infty$ algebra structure $\{m_n\}$ on $N$, unique up to isomorphism, and $A_\infty$ algebra quasi-isomorphisms 
		\begin{equation*}
		\begin{tikzcd}
		Q:(A,\{\mu_n\}) \ar[shift left]{r} & (N,\left\{m_n\right\}):I \ar[shift left]{l}
		\end{tikzcd}
		\end{equation*} such that $I_1=i$, $Q_1=q$ and $QI=\id_N$.

		\item If $M=(L,\{\vartheta_n\})$ is an $L_\infty$ algebra, then there exists an $L_\infty$ algebra structure $\{\ell_n\}$ on $N$, unique up to isomorphism, and $L_\infty$ algebra quasi-isomorphisms 
		\begin{equation*}
		\begin{tikzcd}
		Q:(L,\{\vartheta_n\}) \ar[shift left]{r} & (N,\left\{\ell_n\right\}):I \ar[shift left]{l}
		\end{tikzcd}
		\end{equation*} such that $I_1=i$, $Q_1=q$ and $QI=\id_N$. 
	\end{enumerate}
\end{theorem}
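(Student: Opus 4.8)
The plan is to deduce both statements from the standard machinery of transfer along contractions, phrased most cleanly via the bar/cobar (resp. Quillen chains / Quillen functor) adjunctions, and then via explicit sum-over-trees formulas to pin down the normalization $I_1 = i$, $Q_1 = q$, $QI = \mathrm{id}_N$. I will treat part (1) in detail and then indicate how (2) follows by the formally identical argument with the cocommutative coalgebra $\Lambda sN$ replacing the tensor coalgebra $T(sN)$, or alternatively by antisymmetrizing.

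First, for part (1), I would recall that an $A_\infty$ structure on $A$ is equivalent to a square-zero coderivation $\delta$ of the tensor coalgebra $T(sA)$, i.e.\ to the DGC $BA$, and that $A_\infty$ morphisms correspond to DGC morphisms of bar constructions. Starting from the contraction $(M,N,i,q,K)$ of chain complexes, I would first transfer the \emph{linear} differential $m_1 = \mu_1$ so that $(N,m_1)$ with $m_1 := q\,\mu_1\, i$ is a complex and $i,q$ are chain maps (here one uses $qi = \mathrm{id}$, $iq - \mathrm{id} = \pm(\mu_1 K + K\mu_1)$, and the side conditions $K^2 = Ki = qK = 0$); this is the $k=1$ case. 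Then the higher transferred products $m_n$ for $n \geq 2$, together with the components $I_n, Q_n$ of the morphisms, are defined by the usual sum over planar rooted trees with leaves decorated by $i$, internal edges by the homotopy $K$ (or $-K$ with appropriate signs), internal vertices by the $\mu_k$'s, and root by $q$ (for $Q$) or $K$ (for $I$, $n\geq 2$); for $I$ the root edge carries $i$ when $n=1$. The verification that these satisfy the Stasheff identities and the $A_\infty$-morphism equations is a bookkeeping induction on $n$: one shows that the generating coderivation/cohomomorphism built from these trees squares to zero / intertwines the differentials, using at each step the contraction identities. This is the step I expect to be the main obstacle, though it is entirely standard and I would cite \cite{Kad80,Mer99,Man10,Hue11A,Lod12,Ber14A} rather than reproduce the sign-laden computation; the key structural input is that $iq - \mathrm{id}_M = d_M K + K d_M$ lets one contract any ``loop'' created by a $K$-labelled edge, which is precisely what drives the induction.

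The normalization claims are then immediate from the tree formulas: the only tree contributing to $I_1$ is the single leaf-root edge labelled $i$, so $I_1 = i$; dually $Q_1 = q$; and $QI = \mathrm{id}_N$ follows because $qi = \mathrm{id}_N$ and every composite tree in $QI$ with $n\geq 2$ leaves contains an internal edge of the form $q$ followed immediately by $i$ meeting at a vertex with a $K$ on the other side, which vanishes by $qK = 0$ or $Ki = 0$ — more precisely one checks $(QI)_1 = qi = \mathrm{id}$ and $(QI)_n = 0$ for $n\geq 2$ by a parity/cancellation argument on the trees, using $qK=Ki=K^2=0$. Uniqueness up to isomorphism of $\{m_n\}$ follows from the general fact that any two transferred structures are related by an $A_\infty$ isomorphism lifting $\mathrm{id}_N$, which one gets by transferring along the identity contraction and invoking that $BA \to BN$ is a quasi-isomorphism of DGCs onto its image together with obstruction theory in the tower defining DGC morphisms.

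Finally, for part (2), I would run the same argument replacing the tensor coalgebra by the cofree cocommutative conilpotent coalgebra $\Lambda sL$, so that an $L_\infty$ structure is a square-zero coderivation of $\mathcal C(L)$ and $L_\infty$ morphisms are CDGC morphisms; the transfer trees become sums over \emph{isomorphism classes} of trees (equivalently, one symmetrizes the $A_\infty$ formulas), and the side conditions on $K$ again permit the inductive verification. Alternatively — and this is the shortest route given what is already available in the excerpt — one observes that part (2) follows from part (1) by antisymmetrization: if $L$ is an $L_\infty$ algebra one may regard the relevant data through the functor $(-)^{\mathcal L}$, but more directly the cocommutative analogue is obtained verbatim, and the normalization $I_1 = i$, $Q_1 = q$, $QI = \mathrm{id}_N$ is read off exactly as before. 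I would state that the two parts are proved by the same method and point to the literature for the detailed sign conventions, since the contribution of this paper lies in the applications that follow, not in re-deriving the transfer theorem.
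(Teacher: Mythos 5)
The paper does not actually prove Theorem~\ref{HTT}: it is recalled as a known instance of the homotopy transfer theorem with references to \cite{Kad80,Mer99,Man10,Hue11A,Lod12,Ber14A}, followed only by the recursive formulas for $\lambda_n$, $\theta_n$ and the transferred $m_n,\ell_n,I_n,J_n$. Your sketch is the standard proof contained in those references, and your sum-over-trees description is just the unrolled form of the paper's recursions, so in substance you are doing what the paper points to; deferring the sign-heavy verification of the Stasheff and morphism identities to the literature is no worse than what the paper itself does, and your normalization and uniqueness arguments (linear part of the composite is $qi=\id_N$, side conditions $K^2=Ki=qK=0$ killing the composite trees) are the standard ones. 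One caution: the aside that part (2) ``follows from part (1) by antisymmetrization'' should not be leaned on --- there is no direct way to turn a transfer over the tensor coalgebra $T(sN)$ into one over $\Lambda sN$ without already having an enveloping construction, and in this paper that construction is itself built from Theorem~\ref{HTT}, so the argument would be circular; the correct route is the one you also state, namely running the identical argument verbatim with the cofree conilpotent cocommutative coalgebra $\mathcal C(L)=\Lambda sL$ in place of $T(sA)$.
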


The maps involved in the higher structure of Theorem \ref{HTT} can be described in several ways. For the purposes of this paper, we will describe the maps using recursive algebraic formulas. We will consistently use the following convention for the rest of the paper: contractions of an $L_\infty$ algebra will be denoted by $(i,q,K)$, whereas contractions of an $A_\infty$ algebra will be denoted by $(j,p,G)$. The capital letters $I,Q$ or $J,P$ will stand for the corresponding induced infinity quasi-isomorphisms.

The higher multiplications $\left\{m_n\right\}$ on $N$ and the terms $\left\{J_n\right\}$ of the $A_\infty$ quasi-isomorphism $J$ are recursively given as follows. Formally, set $G\lambda_1=-j$, and define $\lambda_n:H^{\otimes n}\to A$ for $n\geq 2$ recursively by 
\begin{equation*}
\lambda_n(x_1,...,x_n) = \sum_{k=2}^{n} m_k \left( \sum_{i_1+ \cdots + i_k = n}(-1)^{\alpha(i_1,...,i_k)} G\lambda_{i_1}\otimes \cdots \otimes G\lambda_{i_k} \right) (x_1\otimes \cdots \otimes x_n).
\end{equation*}
Here, $\alpha(i_1,...,i_k)=\sum_{j<k}i_j(i_k-1)$, see \cite[\S 12]{Ber14A}. Then, 
\begin{equation*}
m_n = p \circ \lambda_n \quad \textrm{ and } \quad J_n = G\circ \lambda_n \quad \textrm{ for all } n\geq 2.
\end{equation*}

Similarly, the higher brackets $\left\{ \ell_n\right\}$ and the Taylor series $\left\{I_n\right\}$ of the $L_\infty$ quasi-isomorphism $I$ are recursively given as follows. Formally, set $K\theta_1=-i$, and define $\theta_n:H^{\otimes n}\to L$ for $n\geq 2$ recursively by
\begin{equation*}
\theta_n\left(x_1,...,x_n\right) = \sum_{k=2}^{n} \ \ \sum_{\substack{i_1+\cdots +i_k=n \\ i_1 \leq \cdots \leq i_k}} \ \  \sum_{\widetilde S(i_1,...,i_k)} (-1)^{\varepsilon_\sigma} \ell_k\left( I_{i_1}\left(x_{\sigma(1)},...,x_{\sigma(i_1)}\right),...,I_{i_k}\left(x_{\sigma(i_{k-1}+1)},...,x_{\sigma(n)}\right)\right).
\end{equation*} In the equation above, $\widetilde S(i_1,...,i_k)$ are the \emph{$(i_1,...,i_k)$-shuffle permutations} of the symmetric group $S_n$, whose elements are those $\sigma\in S_n$ such that $\sigma(1)=1$, and
\begin{equation*}
\sigma(1) < \cdots < \sigma(i_1), \quad \sigma(i_1+1)< \cdots < \sigma(i_2), \quad ... ,\quad  \sigma(i_{k-1}+1)< \cdots < \sigma(n).
\end{equation*} The sign $\varepsilon_\sigma$ is determined by the Koszul convention. Then, 
\begin{equation*}
\ell_n = q \circ \theta_n \quad \textrm{ and } \quad I_n = K\circ \theta_n \quad \textrm{ for all } n\geq 2.
\end{equation*}

\section{The universal enveloping $A_\infty$ algebra as a transfer}\label{Seccion2} 

We produce the universal enveloping $A_\infty$ algebra of a given $L_\infty$ algebra via a transfer process. To do so, we start by showing (Thm. \ref{Uno}) that the classical adjoint pair $$U:DGL \leftrightarrows DGA: (-)^\mathcal L$$ commutes with the transfer of higher structure. See \cite[Chap. 21]{Yve12} for a careful exposition of the adjoint pair above. After the proof of Thm. \ref{Uno}, we explain how to produce such a universal envelope, which turns out to coincide with Baranovsky's construction \cite{Bar08} up to homotopy.

\begin{theorem}\label{Uno}
	Let $L$ and $UL$ be a DGL and its classical universal enveloping DGA, respectively. Fix a contraction from $L$ onto $H=H_*(L)$, and denote by $\{\ell_n\}$ the induced $L_\infty$ structure on $H$. Then, there is an explicit contraction from $UL$ onto $\Lambda H$, so that denoting by $\{m_n\}$ the induced $A_\infty$ algebra structure on $\Lambda H$:
	\begin{enumerate}
		\item[{(i)}]The antisymmetrization $\left\{m_n^\mathcal L\right\}$ of $\{m_n\}$ fits into a strict $L_\infty$ embedding  $$\imath: \left(H, \left\{\ell_n \right\}\right) \hookrightarrow \left(\Lambda H, \{m_n^\mathcal L\}\right),$$ that is, for every $x_i \in H,$
		\begin{equation*}
		\ell_n(x_1,...,x_n) =  \sum_{ \sigma\in S_n}\chi(\sigma)\ m_n\left(x_{\sigma(1)}\otimes \cdots \otimes x_{\sigma(n)}\right)=m_n^\mathcal L (x_1,...,x_n).
		\end{equation*}
		\item[{(ii)}]  The $A_\infty$ algebra $\left(\Lambda H, \{m_n^\mathcal L\}\right)$ is isomorphic to Baranovsky's enveloping construction on $\left(H, \left\{\ell_n \right\}\right)$.
	\end{enumerate}
\end{theorem}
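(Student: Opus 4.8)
The plan is to build the promised contraction from $UL$ onto $\Lambda H$ directly out of the given contraction $(i,q,K)$ from $L$ onto $H$, and then to identify the transferred $A_\infty$ structure. First I would recall that, as a graded vector space, the classical enveloping algebra $UL$ is naturally isomorphic to $\Lambda L$ via the Poincaré–Birkhoff–Witt (PBW) symmetrization map $\mathrm{sym}\colon \Lambda L \xrightarrow{\ \cong\ } UL$. Under this identification, the differential of $UL$ transports to a differential on $\Lambda L$ that is a derivation for the shuffle/symmetric product and restricts to $\ell_1 = d_L$ on the generators $L$. The key point is that a contraction $(i,q,K)$ of complexes extends functorially to a contraction of the induced symmetric coalgebras/algebras: one uses the tensor-trick (or the "perturbation lemma" package of \cite{Lod12,Ber14A}) to produce a contraction $(\Lambda i, \Lambda q, \Lambda K)$ from $\Lambda L$ onto $\Lambda H$, and then perturbs by the PBW differential. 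Since that perturbation is a derivation concentrated in "internal" arity, the homological perturbation lemma applies and yields the desired explicit contraction $(j,p,G)$ from $UL \cong (\Lambda L, d)$ onto $\Lambda H$; by construction $p$ restricts to $q$ on $L\subset \Lambda L$ and $j$ restricts to $i$ on $H \subset \Lambda H$.

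Next I would unwind what the homotopy transfer theorem (Theorem \ref{HTT}(i)) produces on $\Lambda H$ through this particular contraction, using the recursive formulas $\lambda_n$, $m_n = p\lambda_n$, $J_n = G\lambda_n$ recalled in the background section. The heart of statement \textit{(i)} is to show that when all inputs lie in the sub–vector space $H \subseteq \Lambda H$, the transferred maps $m_n$ land back in $H$, and in fact agree there, after antisymmetrization, with the transferred $L_\infty$ brackets $\ell_n = q\theta_n$ coming from the $L_\infty$ transfer on $H$ (Theorem \ref{HTT}(ii)). The mechanism is a compatibility between the two transfer recursions: because the DGA structure on $UL \cong \Lambda L$ has $m_2$ the symmetric product whose antisymmetrization is exactly the Lie bracket of $L$ (this is the infinitesimal/linear part of PBW), and because the homotopy $G$ restricted to the relevant pieces is built from $K$, one can prove by induction on $n$ that
\begin{equation*}
\sum_{\sigma\in S_n}\chi(\sigma)\, G\lambda_n\bigl(x_{\sigma(1)},\dots,x_{\sigma(n)}\bigr) = K\theta_n(x_1,\dots,x_n) = I_n(x_1,\dots,x_n),
\end{equation*}
for $x_i \in H$, and simultaneously that the antisymmetrized $\lambda_n$ equals $\theta_n$ when evaluated on $H$. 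Applying $p$ (resp. $q$) to both sides then gives $m_n^{\mathcal L}|_H = \ell_n$, which is exactly the strictness of the embedding $\imath\colon (H,\{\ell_n\}) \hookrightarrow (\Lambda H,\{m_n^{\mathcal L}\})$; the fact that $\imath$ is an $L_\infty$ morphism at all (not just the bracket identity) is automatic because $\mathrm{sym}$ is a morphism of complexes compatible with the homotopies, so the higher Taylor components vanish.

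For part \textit{(ii)}, I would recall Baranovsky's construction \cite{Bar08}: given an $L_\infty$ algebra $(H,\{\ell_n\})$, present it as a transfer from a DGL — the most economical choice being $H = $ the transfer of $\mathcal L\mathcal C(H)$, or, when $H$ arises as the homology of a DGL $L$ as here, directly from $L$ itself — and define $U_{\mathrm{Bar}}(H)$ as the $A_\infty$ algebra obtained by transferring the DGA structure of $U(\mathcal L\mathcal C(H))$ (resp. $UL$) along the correspondingly extended contraction. Since both our $(\Lambda H,\{m_n\})$ and $U_{\mathrm{Bar}}(H)$ are defined by transferring the enveloping DGA of the same DGL along contractions that agree on the underlying complexes and that are both obtained by the PBW-plus-perturbation extension procedure, the uniqueness-up-to-isomorphism clause of Theorem \ref{HTT}(i) forces the two transferred $A_\infty$ structures on $\Lambda H$ to be isomorphic; chasing the explicit isomorphism of contractions (in the sense of \cite{Man10} recalled above) through the transfer recursion upgrades this to the stated isomorphism of $A_\infty$ algebras.

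The main obstacle I anticipate is \emph{sign bookkeeping} in the simultaneous induction of the previous paragraph: matching the sign $\alpha(i_1,\dots,i_k) = \sum_{j<k} i_j(i_k-1)$ of the $A_\infty$ recursion against the Koszul sign $\varepsilon_\sigma$ and the shuffle signs $\chi(\sigma)$ of the $L_\infty$ recursion, after passing through the PBW symmetrization (which itself contributes $\frac{1}{n!}\sum_\sigma$-type averaging factors and Koszul signs), is the place where a naive argument can silently fail. A secondary technical point is verifying that the perturbation of $(\Lambda i,\Lambda q,\Lambda K)$ by the PBW differential really does satisfy the side conditions $G^2 = Gj = pG = 0$ needed to call $(j,p,G)$ a contraction — this should follow from the standard perturbation-lemma output together with the annihilation conditions on $K$, but it must be checked rather than asserted.
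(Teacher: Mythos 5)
For part (i) your strategy is essentially the paper's: build a contraction of $UL$ onto $\Lambda H$ that is compatible with the PBW inclusion $\imath\colon L\hookrightarrow UL$, and then compare the two transfer recursions ($\theta_n$ versus $\lambda_n$) by induction; the paper isolates that induction as Lemma \ref{Principal}, whose hypotheses are exactly the facts you use implicitly ($\imath$ is a strict Lie map for the commutator, $\imath i=j\widehat\imath$, $p\imath=\widehat\imath q$, and crucially $\imath K=G\imath$). Two remarks on your construction of the contraction. First, the perturbation step is superfluous: the PBW symmetrization $\Lambda L\to UL$ is already a chain isomorphism when $\Lambda L$ carries the derivation extension of $d_L$, so the symmetric tensor-trick contraction $(\Lambda i,\Lambda q,\Lambda K)$ contracts $UL\cong(\Lambda L,d)$ onto $\Lambda H$ with no perturbation lemma needed (the paper instead writes $L=B\oplus\partial B\oplus C$ and uses $UL\cong\Lambda(B\oplus\partial B)\otimes UH$, defining $G$ as the inverse of $d\colon\Lambda B\to\Lambda\partial B$ extended by zero). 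Second, the point you defer as a ``technical check'' --- that $G$ restricted to word length one is $K$, i.e.\ that $\imath$ is a morphism of contractions in the sense of the background section --- is not a side issue but the entire content that makes the induction run; in the paper the contraction is engineered precisely so that this holds, and without it equation (\ref{Item1}) and its consequences fail. With the word-length-preserving tensor-trick homotopy this compatibility does hold, so your route can be completed, but it must be stated and verified, not anticipated as bookkeeping.

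Part (ii) has a genuine gap. You describe Baranovsky's envelope as ``the $A_\infty$ algebra obtained by transferring the DGA structure of $UL$ along a correspondingly extended contraction'' and then invoke the uniqueness clause of Theorem \ref{HTT}; but that is not Baranovsky's definition (his structure on $\Lambda H$ is built from a specific chain homotopy on the coalgebra side, cf.\ \cite[Thm.~1]{Bar08}), so there is no ``same DGA, two contractions'' situation to which the uniqueness clause applies. The paper's argument uses instead the two facts actually available from \cite[Thm.~3]{Bar08}: Baranovsky's construction on a DGL coincides with the classical $UL$, and it sends the $L_\infty$ quasi-isomorphism $Q\colon(L,\partial)\to(H,\{\ell_n\})$ to an $A_\infty$ quasi-isomorphism $(UL,d)\to(UH,\{\mu_n\})$. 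Combining this with the quasi-isomorphism $P\colon(UL,d)\to(UH,\{m_n\})$ coming from the contraction gives a zig-zag of $A_\infty$ quasi-isomorphisms between $(UH,\{m_n\})$ and $(UH,\{\mu_n\})$, and since both structures are minimal ($m_1=\mu_1=0$) they are $A_\infty$-isomorphic. Your conclusion is correct, but the bridge to Baranovsky's construction must go through these cited properties rather than through a re-definition of his envelope as a transfer.
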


The map $\imath:H\hookrightarrow \Lambda H$ above is an $L_\infty$ version of a PBW map $L\hookrightarrow UL.$ The proof of Thm. \ref{Uno} relies in the following lemma, which is elementary but interesting in itself. It will be relevant for the enveloping $A_\infty$ algebra as a transferred structure (Def. \ref{DefUnive}).

\begin{lemma}\label{Principal} Let $(A,\{\mu_n\})$ and $(L,\{\vartheta_n\})$ be an $A_\infty$ and an $L_\infty$ algebra, and assume that there are contractions of $A$ and of $L$ onto complexes $(M_A,d)$ and $(M_L,\partial)$, respectively:
	\begin{equation*}
	\begin{tikzcd}
	A \ar[loop left]{l}{G} \ar[shift left]{r}{p} & M_A \ar[shift left]{l}{j} & \quad & L \ar[loop left]{l}{K} \ar[shift left]{r}{q} & M_L. \ar[shift left]{l}{i}
	\end{tikzcd}
	\end{equation*} 
		\item If there is a morphism of contractions $f:L\to A$ which is a strict $L_\infty$ morphism for the antisymmetrization of the $A_\infty$ algebra structure $\{\mu_n\}$, then the recursive formulas $\{\theta_n\}$ for transferring the $L_\infty$ structure on $M_L$ map to the antisymmetrization of those $\{\lambda_n\}$ for transferring the $A_\infty$ structure on $M_A$. More precisely, for any $n\geq 1$ and given $x_1,...,x_n\in M_L,$
		\begin{equation}\label{Item1}
		f\theta_n(x_1,...,x_n) = \sum_{\sigma \in S_n}\chi(\sigma) \lambda_n  \left(\widehat f (x_{\sigma(1)}) ,..., \widehat f (x_{\sigma(n)})\right).
		\end{equation} Therefore, the higher brackets are the antisymmetrization of the higher multiplications:
		\begin{equation}\label{Item2}
		\widehat f\ell_n(x_1,...,x_n) = \sum_{\sigma\in S_n}\chi(\sigma) m_n  \left( \widehat f (x_{\sigma(1)}) ,...,\widehat f (x_{\sigma(n)})\right),
		\end{equation} the terms of the induced $L_\infty$ quasi-isomorphisms $I\colon M_L\to L$ are the antisymmetrization of the terms of the $A_\infty$ quasi-isomorphism $J\colon M_A\to A$:
		\begin{equation}\label{Item3}
		fI_n(x_1,...,x_n) = \sum_{\sigma \in S_n}\chi(\sigma) J_n \left( \widehat f (x_{\sigma(1)}) ,..., \widehat f (x_{\sigma(n)})\right),
		\end{equation} and $\widehat f:M_L\to M_A$ is a strict $L_\infty$ morphism for the antisymmetrization of $\{m_n\}$.
	
\end{lemma}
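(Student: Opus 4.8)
The four assertions all follow from the single identity \eqref{Item1}, which carries the entire content; so the plan is to isolate \eqref{Item1} and prove it by induction on $n$. Granting \eqref{Item1} at level $n$: identity \eqref{Item2} comes from applying $\widehat f$ to $\ell_n = q\theta_n$ and using the morphism-of-contractions relation $\widehat f q = pf$ together with $m_n = p\lambda_n$; identity \eqref{Item3} comes from applying $f$ to $I_n = K\theta_n$ and using $fK = Gf$ together with $J_n = G\lambda_n$ (for $n=1$, once the formal conventions $K\theta_1 = -i$, $G\lambda_1 = -j$ are unwound, this is the morphism-of-contractions identity $fi = j\widehat f$); and the last assertion, that $\widehat f$ is a strict $L_\infty$ morphism into the antisymmetrization of $\{m_n\}$, is exactly \eqref{Item2} for $n\ge 2$ together with the trivial observation that $\widehat f = pfi$ is a chain map and hence intertwines $\ell_1$ with $m_1$.

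It remains to prove \eqref{Item1}. The base case $n = 1$ is the identity $fi = j\widehat f$ from the morphism-of-contractions package. For the inductive step, fix $n \ge 2$ and assume \eqref{Item1}, hence also \eqref{Item3}, at all levels $< n$. Expand $f\theta_n(x_1,\dots,x_n)$ by the recursive formula for $\theta_n$: it is a sum over $k$, over weakly increasing compositions $i_1 \le \cdots \le i_k$ of $n$, and over shuffles $\sigma \in \widetilde S(i_1,\dots,i_k)$, of terms $(-1)^{\varepsilon_\sigma} f\vartheta_k\bigl(I_{i_1}(x_{\sigma(1)},\dots,x_{\sigma(i_1)}),\dots,I_{i_k}(\dots,x_{\sigma(n)})\bigr)$. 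Now push $f$ inward in two steps. First, since $f$ is by hypothesis a strict $L_\infty$ morphism into the antisymmetrization of $(A,\{\mu_n\})$, one has $f\vartheta_k(y_1,\dots,y_k) = \sum_{\tau \in S_k}\chi(\tau)\,\mu_k\bigl(fy_{\tau(1)},\dots,fy_{\tau(k)}\bigr)$, so $\vartheta_k$ is replaced by the $S_k$-antisymmetrization of $\mu_k$. Second, each factor $fy_{\tau(j)} = fI_{i_{\tau(j)}}(-)$, with $i_{\tau(j)} < n$ because $k\ge 2$, is rewritten via the inductive hypothesis \eqref{Item3} as the $S_{i_{\tau(j)}}$-antisymmetrization of $J_{i_{\tau(j)}} = G\lambda_{i_{\tau(j)}}$ precomposed with $\widehat f$. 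The result is a sum over $k$, weakly increasing compositions $(i_j)$, shuffles $\sigma$, block permutations $\tau \in S_k$, and inner permutations $\rho_j \in S_{i_{\tau(j)}}$, of signed terms $\mu_k\bigl(J_{i_{\tau(1)}}(\widehat f x_{\cdots}),\dots,J_{i_{\tau(k)}}(\widehat f x_{\cdots})\bigr)$.

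On the other side, expanding the right-hand side of \eqref{Item1} by the recursive formula for $\lambda_n$ gives a sum over $k$, over \emph{all} compositions $i'_1 + \cdots + i'_k = n$, and over all $\pi \in S_n$, of signed terms $\mu_k\bigl(J_{i'_1}(\widehat f x_{\pi(1)},\dots),\dots,J_{i'_k}(\dots,\widehat f x_{\pi(n)})\bigr)$. The proof is finished by matching the two expansions term by term. The bijection of indexing data is the natural one: the pair (weakly increasing composition $(i_j)$, block permutation $\tau$) corresponds to the composition $(i'_j) = (i_{\tau(j)})_j$, and the shuffle $\sigma$ together with the inner permutations $(\rho_j)$ assembles into the global permutation $\pi$; conversely $\pi$ and $(i'_j)$ recover $\sigma$, $\tau$ and the $\rho_j$ once the block sizes are sorted. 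Verifying that this is a genuine bijection and — the crux — that the Koszul signs agree, i.e. that $(-1)^{\varepsilon_\sigma}\chi(\tau)\prod_j\chi(\rho_j)$, corrected by the signs produced when commuting $f$ and $\widehat f$ past $\vartheta_k$ and the $I_{i_j}$ (which carry the degrees of those maps), equals $(-1)^{\alpha(i'_1,\dots,i'_k)}\chi(\pi)$, is where I expect essentially all the difficulty to lie: it is a standard but delicate unshuffling-and-sign bookkeeping of the type familiar from comparisons of $A_\infty$ and $L_\infty$ transfer formulas.

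Conceptually the mechanism is transparent: the nested antisymmetrizations — over $S_k$ for the outer operation $\mu_k$, and over each $S_{i_j}$ for the subtrees — combined with the sum over the shuffles $\widetilde S(i_1,\dots,i_k)$, reconstitute exactly the full sum $\sum_{\pi \in S_n}\chi(\pi)$ of the $A_\infty$ transfer formula, while the constraint $i_1 \le \cdots \le i_k$ in the $L_\infty$ formula, relaxed by the $S_k$-action that the strictness of $f$ introduces, accounts for the unrestricted sum over compositions in the $A_\infty$ formula. A more structural alternative would express both transfers as sums over SDR-decorated rooted trees (planar for $A_\infty$, abstract for $L_\infty$) and observe that under the strictness hypothesis the antisymmetrization functor identifies the two tree sums, trading the explicit sign computation for that formalism.
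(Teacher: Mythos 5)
Your outline is correct and is, in substance, the paper's own argument: prove (\ref{Item1}) by induction on $n$, obtain (\ref{Item2}) and (\ref{Item3}) at each stage from $\ell_n=q\theta_n$, $I_n=K\theta_n$, $m_n=p\lambda_n$, $J_n=G\lambda_n$ together with the morphism-of-contractions identities $pf=\widehat f q$, $fK=Gf$, $fi=j\widehat f$, and in the inductive step push $f$ through $\theta_n$ using strictness of $f$ and the inductive hypothesis, then regroup permutations until the recursion for $\lambda_n$ appears. The only genuine difference is where the combinatorial work is placed. The paper deliberately reduces to the case of a DGA $A$ and a DGL $L$ contracted onto their homology, where $\theta_n$ has only a binary outer operation, and there it carries out the regrouping and sign count explicitly (the passage from the sum over $(s,n-s)$-shuffles, $\tau\in S_s$, $\rho\in S_{n-s}$ to the full sum over $S_n$ with sign $(-1)^{s+1}$, followed by expansion of the commutator into $\lambda_n$), asserting that the general case is the same with heavier formulas. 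You instead stay in the general $A_\infty$/$L_\infty$ setting and reduce the inductive step to a bijection of indexing data (weakly increasing composition, shuffle in $\widetilde S(i_1,\dots,i_k)$, block permutation, inner permutations) versus (arbitrary composition, $\pi\in S_n$), plus a Koszul-sign identity, which you describe but do not verify; that matching is exactly the ``more involved formulas'' the paper declines to write, and it is where the entire content of the step sits. If you keep the general-case formulation, you must actually check that the normalization $\sigma(1)=1$ and the constraint $i_1\leq\cdots\leq i_k$ in $\widetilde S(i_1,\dots,i_k)$ are precisely compensated by the block-permutation sum created by antisymmetrizing $\mu_k$ (so that repeated block sizes produce no multiplicities), and that the conventions $G\lambda_1=-j$ versus $J_1=j$ and $K\theta_1=-i$ versus $I_1=i$ do not introduce stray signs when some $i_j=1$; alternatively, follow the paper and do the computation explicitly in the DGA/DGL case, remarking that the general case is formally identical.
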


\begin{remark}
	The analog of Lemma \ref{Principal} for a morphism of contractions $g:A\to L$ which is a strict $L_\infty$ morphism for the antisymmetrization of the $A_\infty$ algebra structure on $A$ also holds. 
\end{remark}

{\noindent \it Proof of Lemma \ref{Principal}:} For clarity of exposition, we prove the case in which $A=(A,d)$ is a DGA and $M_A=(HA,0)$ is its homology endowed with the trivial differential; and similarly $L=(L,\partial)$ is a DGL and $M_L=(HL,0)$. The general case follows exactly the same proof, but with more involved formulas that do not give any more insight. The multiplication map of $A$ will be denoted by $m$. We prove equation (\ref{Item1}) by induction on $n$, and deduce at each inductive step the corresponding equation for (\ref{Item2}) and for (\ref{Item3}). 

Let $n=2.$ Use, in the order given, the definition of $\theta_2$, that $f$ is a Lie map for the brackets involved, that $fi=j\widehat f$, and recognize the recursive formula for $\lambda_2:$
\begingroup
\addtolength{\jot}{1em}
\begin{align*}
f\theta_2\left(x_1,x_2\right) &= f\left[i(x_1),i(x_2)\right] = \left[fi(x_1),fi(x_2)\right] = \left[ j\widehat f(x_1),j\widehat f(x_2)\right]\\
&= m \left( j\widehat f(x_1)\otimes j\widehat f(x_2) - (-1)^{|x_1||x_2|} j\widehat f(x_2)\otimes j\widehat f(x_1)\right)\\
&= \left(m\circ j\otimes j\right) \left(\widehat f(x_1)\otimes \widehat f(x_2) - (-1)^{|x_1||x_2|} \widehat f(x_2)\otimes \widehat f(x_1)\right)\\
&= \lambda_2 \left(\widehat f(x_1)\otimes \widehat f(x_2) - (-1)^{|x_1||x_2|} \widehat f(x_2)\otimes \widehat f(x_1)\right).
\end{align*}
\endgroup Equation (\ref{Item1}) is therefore proven. Using that $f$ is a morphism of contractions, and the proof of the case $n=2$ above, we can easily prove equations (\ref{Item2}) and (\ref{Item3}):
\begin{align*}
\widehat f \ell_2\left(x_1,x_2\right) &= \widehat f q \theta_2 \left(x_1,x_2\right) = pf \theta_2 \left(x_1,x_2\right) = p\lambda_2\left(\widehat f(x_1)\otimes \widehat f(x_2) - (-1)^{|x_1||x_2|} \widehat f(x_2)\otimes \widehat f(x_1)\right)\\[0.2cm]
&= m_2\left(\widehat f(x_1)\otimes \widehat f(x_2) - (-1)^{|x_1||x_2|} \widehat f(x_2)\otimes \widehat f(x_1)\right);
\end{align*}
\begin{align*}
fI_2\left(x_1,x_2\right) &= fk \theta_2 \left(x_1,x_2\right) = Gf \theta_2 \left(x_1,x_2\right) = G\lambda_2 \left(\widehat f(x_1)\otimes \widehat f(x_2) - (-1)^{|x_1||x_2|} \widehat f(x_2)\otimes \widehat f(x_1)\right)\\[0.2cm]
&= J_2 \left(\widehat f(x_1)\otimes \widehat f(x_2) - (-1)^{|x_1||x_2|} \widehat f(x_2)\otimes \widehat f(x_1)\right).
\end{align*}

Assume next that for every $p\leq n-1$, equation (\ref{Item1}) holds. Then, (\ref{Item2}) and (\ref{Item3}) also hold for $p\leq n-1$, which follows from a manipulation identical to the one done for the case $n=2$. Let us prove that equation (\ref{Item1}) holds for $p=n$, and then also equations (\ref{Item2}) and (\ref{Item3}) for $p=n$ are straightforward consequence of $f$ being a morphism of contractions and the just proven case $n$ of equation \ref{Item1}. To lighten notation, we write $\chi(\sigma):=\varepsilon(\sigma)\sgn(\sigma)$ for any given permutation $\sigma.$

Use, in the order given: the definition of $\theta_n$, that $f$ is a Lie map for the brackets involved, the identity $fi=j\widehat f$ and the induction hypothesis, and rearrange the permutations accordingly, to end up with the recursive formula of $\lambda_n$ evaluated at the desired elements:
\begingroup
\addtolength{\jot}{1em}
\begin{align*}
f\theta_n \left(x_1,...,x_n\right) &= \sum_{s=1}^{n-1} \sum_{\sigma\in S(s,n-s)} \varepsilon(\sigma)f \left[I_s\left(x_{\sigma(1)},...,x_{\sigma(s)}\right),I_{n-s}\left(x_{\sigma(s+1)},...,x_{\sigma(n)}\right)\right]\\
&= \sum_{s=1}^{n-1} \sum_{\sigma\in S(s,n-s)} \varepsilon(\sigma) \left[fI_s\left(x_{\sigma(1)},...,x_{\sigma(s)}\right),fI_{n-s}\left(x_{\sigma(s+1)},...,x_{\sigma(n)}\right)\right]
\end{align*}
\begin{align*}
&= \sum_{s=1}^{n-1} \sum_{\sigma\in S(s,n-s)} \varepsilon(\sigma) \left[J_s\left(\sum_{\tau\in S_s}\chi(\tau) \widehat f(x_{\tau \sigma(1)})\otimes \cdots\otimes \widehat f(x_{\tau \sigma(s)})    \right),J_{n-s}\left(  \sum_{\rho\in S_{n-s}}\chi(\rho) \widehat f(x_{\rho \sigma(s+1)})\otimes \cdots \otimes\widehat f(x_{\rho \sigma(n)})  \right)\right]
\end{align*}
\begin{align*}
&= \sum_{s=1}^{n-1}\sum_{\sigma\in S(s,n-s)}\sum_{\substack{\tau\in S_s\\ \rho\in S_{n-s}}} \varepsilon(\sigma)\chi(\tau)\chi(\rho)\left[J_s\left(\widehat f(x_{\tau\sigma(1)}),...,\widehat f(x_{\tau\sigma(s)})\right),J_{n-s}\left(\widehat f(x_{\rho\sigma(s+1)}),...,\widehat f(x_{\rho\sigma(n)})\right) \right]\\
&= \sum_{s=1}^{n-1}\sum_{\sigma\in S_n}(-1)^{s+1} \chi(\sigma) \left[J_s\left(\widehat f(x_{\sigma(1)}),...,\widehat f(x_{\sigma(s)})\right),J_{n-s}\left(\widehat f(x_{\sigma(s+1)}),...,\widehat f(x_{\sigma(n)})\right) \right]
\end{align*}
\begin{align*}
&= m \Bigg( \sum_{s=1}^{n-1}\sum_{\sigma\in S_n}(-1)^{s+1} \chi(\sigma) \Big(J_s\left(\widehat f(x_{\sigma(1)}),...,\widehat f(x_{\sigma(s)})\right)\otimes J_{n-s}\left(\widehat f(x_{\sigma(s+1)}),...,\widehat f(x_{\sigma(n)})\right) \\ 
&\qquad \qquad\qquad \qquad -(-1)^{\alpha} J_{n-s}\left(\widehat f(x_{\sigma(s+1)}),...,\widehat f(x_{\sigma(n)})\right) \otimes J_s\left(\widehat f(x_{\sigma(1)}),...,\widehat f(x_{\sigma(s)})\right) \Bigg)
\end{align*}
\begin{align*}
&= \lambda_n \left(\sum_{\sigma\in S_n} \chi(\sigma) \widehat f(x_{\sigma(1)}) \otimes \cdots \otimes \widehat f(x_{\sigma(n)})\right).& &
\end{align*} 
\endgroup \hfill$\square$ \\

{\noindent \it Proof of Theorem \ref{Uno}:} To prove $(i)$, we show that fixed a contraction of $L$ onto $HL$, one can choose a contraction of $UL$ onto its homology $HUL\cong UHL \cong \Lambda H$ so that the PBW map $L\hookrightarrow UL$ is a morphism of contractions, and then apply Lemma \ref{Principal}. Let $(i,q,K)$ be a contraction of $L$ onto $H=HL$, and write $L=B\oplus \partial B \oplus C$ for the graded vector space decomposition equivalent to it. By the PBW theorem (\cite[Thm. 21.1]{Yve12}) and some basic facts of differential graded algebra, there are graded vector space isomorphisms 
\begin{equation*}
UL \cong \Lambda L \cong \Lambda\left(B\oplus \partial B \oplus C\right) \cong \Lambda \left(B\oplus \partial B\right)\otimes \Lambda C\cong \Lambda \left(B\oplus \partial B\right)\otimes UH.
\end{equation*} Since $\Lambda\left(B\oplus \partial B\right)$ is acyclic, the injection $j:\left(UH,0\right)\hookrightarrow \left(UL,d \right)$ into a quasi-isomorphism,
\begin{equation*}
\begin{tikzcd}
j:\left(UH,0\right) \ar[hook]{r}{\simeq} & \left(\Lambda\left(B\oplus \partial B\right)\otimes UH,d \right) \ar{r}{\cong} & \left(UL,d\right).
\end{tikzcd}
\end{equation*}  Decompose $UL\cong \Lambda \left(B \oplus \partial B\right) \otimes UH$, let $p:UL \to UH \cong 1\otimes UH $ be the projection onto $UH$, and let $G$ be the inverse of $d:\Lambda B \xrightarrow{\cong} \Lambda \partial B$ extended to all of $UL$ as zero in the subspace $\Lambda B\otimes 1 \otimes UH \subseteq UL.$ Then, $(j,p,G)$ is a contraction of $UL$ onto $UH$ which is a morphism of retracts for the inclusion $L=B\oplus \partial B \oplus C \hookrightarrow UL=\Lambda\left(B\oplus \partial B \oplus C\right)$. 

To prove $(ii)$, denote by $\{\mu_n\}$ the $A_\infty$ algebra structure on $UH$ induced by Baranovsky's construction, and by $\{m_n\}$ the induced by the contraction $(j,p,G)$. Since $(L,\partial)$ is a DGL, Baranovsky's construction coincides with the classical universal enveloping DGA (\cite[Thm. 3]{Bar08}). The $L_\infty$ quasi-isomorphism $Q:(L,\partial) \xrightarrow{\simeq} \left( H, \{\ell_n\}\right)$ provided by the contraction $(i,q,K)$ transforms (by \cite[Thm. 3]{Bar08}) into an $A_\infty$ algebra quasi-isomorphism $U(Q): \left(UL,d\right) \xrightarrow{\simeq} \left( UH,  \left\{ \mu_n\right\} \right)$. There is another $A_\infty$ algebra quasi-isomorphism $P:\left( UL,d\right) \xrightarrow{\simeq} \left(UH, \{m_n\}\right)$ induced by the contraction $(j,p,G)$. Hence, there is a zig-zag of $A_\infty$ quasi-isomorphisms 
\begin{equation*}
\begin{tikzcd}
\left(UH, \left\{m_n\right\}\right) 	& \left(UL,d\right) \ar[swap]{l}{\simeq} \ar{r}{\simeq}& \left(UH, \left\{\mu_n\right\}\right) 
\end{tikzcd}
\end{equation*} Since $\{m_n\}$ and $\{\mu_n\}$ are minimal, the two $A_\infty$ algebra structures are $A_\infty$-isomorphic. \hfill$\square$ \\

The results above motivate Def. \ref{DefUnive} for the universal enveloping $A_\infty$ algebra on an $L_\infty$ algebra.  Recall that any $L_\infty$ algebra $L$ is $L_\infty$ quasi-isomorphic to the DGL $\mathcal L \mathcal C (L)$ (\cite{Lod12}), and that every $L_\infty$ algebra has a minimal model (\cite[Thm. 7.9]{Mar12}). Here, $\mathcal L:\mathsf{CDGC} \leftrightarrows \mathsf{DGL}:\mathcal C$ are the adjoint functors introduced by Quillen (\cite{Qui69}), with no bounding assumptions on the underlying complexes (\cite{Hin01}).

\begin{definition}\label{DefUnive}
	Let $L$ be an $L_\infty$ algebra. Its \emph{universal enveloping $A_\infty$ algebra} is $$U_t(L) := \left(\Lambda L, \left\{m_n\right\}\right),$$ where $\{m_n\}$ is any $A_\infty$ algebra structure arising by exhibiting $\Lambda L$ as a contraction of $\Omega \mathcal C(L)$. In particular, if $L$ is minimal, then the $A_\infty$ structure on $\Lambda L$ is the one given in Theorem \ref{Uno}. 
\end{definition}

The definition given is basically equivalent to Baranovsky's. The difference is that we explicitly use Thm. \ref{Uno} for constructing it, hence avoiding the use of Baranovsky's chain homotopy $K$ \cite[Thm. 1]{Bar08}, and with explicit, more transparent formulas whenever $L$ is minimal. A different way of reading Def. \ref{DefUnive} is as follows. For an arbitrary $L_\infty$ algebra $L$, the $A_\infty$ structure $\{m_n\}$ on $\Lambda L$ arises by forming the diagram:
\begin{equation*}
\begin{tikzcd}
\Omega \mathcal C\left(L\right) \ar[loop left]{l}{} \ar[shift left]{r}{} 	& \Lambda L \ar[shift left]{l}{}\\[0.35cm]
\mathcal L \mathcal C \left(L\right) \ar[loop left]{l}{} \ar[shift left]{r}{} \ar{u}{}		& L \ar[shift left]{l}{} \ar[,swap]{u}{}
\end{tikzcd}
\end{equation*} From this point of view, we start with a contraction from $\mathcal L \mathcal C \left(L\right)$ onto $L$ producing the $L_\infty$ structure of $L$, and then the proof of Theorem \ref{Uno} goes through: the classical PBW map $$\mathcal L \mathcal C \left(L\right) \hookrightarrow U\left(\mathcal L \mathcal C \left(L\right)\right) =\Omega \mathcal C\left(L\right)$$ is made a morphism of contractions, where we contract $\Omega \mathcal C\left(L\right)$ onto its homology $H_*\left(\Omega \mathcal C\left(L\right)\right)$, which is isomorphic as a graded vector space to $\Lambda L$ (this isomorphism follows, for example, from \cite[Thm. 1]{Bar08}). Given $f:L_1 \to L_2$ an $L_\infty$ morphism, and once chosen contractions 
\begin{equation*}
\begin{tikzcd}
\Omega \mathcal C(L_i) \ar[loop left]{l}{} \ar[shift left]{r}{p_i} & \Lambda L_i = U_t\left(L_i\right), \ar[shift left]{l}{j_i} \quad i=1,2,
\end{tikzcd}
\end{equation*} there is a uniquely defined $A_\infty$ morphism $$U_t(f)=p_2\circ \Omega \mathcal C(f) \circ j_1 :U_t(L_1) \to U_t(L_2),$$ enjoying  properties similar to Baranovsky's definition on morphisms (see \cite[Thm. 3]{Bar08}).

\section{Homotopical properties and comparison with other envelopes}\label{comparacion}

We collect  the main properties regarding the homotopy type of the several universal enveloping constructions in Proposition \ref{SonHomotopas}. \\

Let $L$ be an $L_\infty$ algebra. Denote by $U_B(L)$ and $U_t(L)$ the construction of Baranovsky and the given in Def. \ref{DefUnive}, respectively. We will consider a third universal $A_\infty$ envelope $U_d(L)$, see discussion after Conjecture \ref{DGCML}. At this point, it suffices to know that $U_d(L)$ is isomorphic to  $\Lambda L$ as a graded vector space, and  carries an $A_\infty$ structure for which there is a DGC quasi-isomorphism $$\mathcal C(L) \xrightarrow{\simeq} BU_d(L).$$ The universal envelopes $U_B,U_t$ and $U_d$ are homotopy equivalent (Prop. \ref{SonHomotopas} \textit{$(i)$}).  Quillen's foundation of rational homotopy theory, as well as other deep results (see for example \cite{Ani89,Hal92,Maj00}), heavily rely on the now classical fact that homology commutes with the classical universal enveloping algebra functor over characteristic zero fields, 
\begin{equation}\label{Conmuta}
UH=HU.
\end{equation} See \cite[Appendix B]{Qui69}. The identity (\ref{Conmuta}) holds only up to homotopy for the universal enveloping constructions $U_B, U_t, U_d$ and $\mathcal U$ (Prop. \ref{SonHomotopas} $(iii)$), where $\mathcal U$ is Lada and Markl's universal enveloping (\cite{Lad95}). Another classical result of Quillen (\cite{Qui69}, see also \cite{Nei78A}) asserts that for a given DGL $L$ with universal enveloping DGA $UL$, there is a natural DGC quasi-isomorphism 
\begin{equation}\label{DGCQuasiQuillen}
\mathcal C(L) \xrightarrow{\simeq} BUL.
\end{equation}  For $L_\infty$ algebras, although $\mathcal C(L)$, $BU_t(L)$ and $BU_B(L)$ are DGC's, there is usually no direct DGC quasi-isomorphism as in (\ref{DGCQuasiQuillen}). However, these DGC's are always weakly equivalent, which is the lift of the quasi-isomorphism (\ref{DGCQuasiQuillen}) when dealing with infinity structures (Prop. \ref{SonHomotopas} $(ii)$).

\begin{proposition}\label{SonHomotopas} Let $L$ be an  $L_\infty$ algebra. Then,
	\begin{enumerate}
		\item[(i)] There are $A_\infty$ quasi-isomorphisms $$U_t(L)\simeq U_B(L)\simeq U_d(L).$$ The three constructions are then the same up to homotopy, and $U_t(L)\cong U_B(L)$ are isomorphic if $L$ is minimal.
		\item[(ii)] There is an $A_\infty$ coalgebra quasi-isomorphism $$\mathcal C(L) \xrightarrow{\simeq} BUL,$$ where $U$ is any of the envelopes $U_t,U_B$ or $U_d$, which is not generally a DGC map for $U_B$ or $U_t$.
		\item[(iii)]  Assume that $H_*\left(L\right)$ carries an $L_\infty$ structure induced by a contraction from $L$ onto it. Then, there are $A_\infty$ quasi-isomorphisms $$U \left(H_*\left(L\right)\right) \simeq H_*\left(UL\right),$$ where $U$ is any of the envelopes $U_t,U_B,U_d$ or $\mathcal U$.
	\end{enumerate}
\end{proposition}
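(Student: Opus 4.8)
The plan is to handle the three items in order, leveraging the transfer-theoretic definition of $U_t$ and the comparison with Baranovsky's construction already established in Theorem \ref{Uno}.

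For item $(i)$, I would first recall that $U_t(L)$ and $U_B(L)$ are both obtained as $A_\infty$ structures on $\Lambda L$ arising from exhibiting $\Lambda L$ as a contraction of $\Omega\mathcal C(L)$ (resp. Baranovsky's contraction of the same DGA onto $\Lambda L$, \cite[Thm. 1]{Bar08}). Two $A_\infty$ structures transferred from the \emph{same} DGA onto the \emph{same} underlying complex along possibly different contractions are $A_\infty$-isomorphic, by the uniqueness-up-to-isomorphism part of the homotopy transfer theorem (Theorem \ref{HTT}(1)): indeed, the induced quasi-isomorphisms $P_t,P_B:\Omega\mathcal C(L)\to\Lambda L$ compose (after choosing an $A_\infty$ inverse of one of them) into an $A_\infty$ quasi-isomorphism $\Lambda L\to\Lambda L$ which is the identity on the underlying complex, hence an isomorphism of minimal $A_\infty$ algebras. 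When $L$ is minimal this is exactly the content of Theorem \ref{Uno}$(ii)$, giving $U_t(L)\cong U_B(L)$ on the nose. For $U_d(L)$, I would use its defining property: there is a DGC quasi-isomorphism $\mathcal C(L)\xrightarrow{\simeq} BU_d(L)$. On the other hand, the classical quasi-isomorphism $\mathcal C(L)\xrightarrow{\simeq} B\Omega\mathcal C(L)$ composed with $B(P_B):B\Omega\mathcal C(L)\xrightarrow{\simeq} BU_B(L)$ exhibits $BU_B(L)$ as weakly equivalent to $\mathcal C(L)$ as a DGC; since the bar construction reflects quasi-isomorphisms (\cite{Lod12}) and $BU_d(L)\simeq\mathcal C(L)\simeq BU_B(L)$, the cobar–bar adjunction (or directly the fact that $B$ creates $A_\infty$ quasi-isomorphisms) yields an $A_\infty$ quasi-isomorphism $U_B(L)\simeq U_d(L)$. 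Chaining these gives $U_t(L)\simeq U_B(L)\simeq U_d(L)$.

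For item $(ii)$, the clean statement is that $\mathcal C(L)$ and $BUL$ are weakly equivalent as $A_\infty$ coalgebras — i.e., connected by a zig-zag of DGC quasi-isomorphisms, equivalently joined by an $A_\infty$ coalgebra quasi-isomorphism. For $U=U_B$ or $U_t$, writing $U=\Lambda L$ with its transferred structure, the $A_\infty$ quasi-isomorphism $P:\Omega\mathcal C(L)\xrightarrow{\simeq}\Lambda L=U(L)$ bar-constructs to a DGC quasi-isomorphism $BP:B\Omega\mathcal C(L)\xrightarrow{\simeq} BU(L)$, and precomposition with the classical counit quasi-isomorphism $\mathcal C(L)\xrightarrow{\simeq} B\Omega\mathcal C(L)$ produces a DGC quasi-isomorphism $\mathcal C(L)\xrightarrow{\simeq} BU(L)$ directly — but I should be careful here, because $BP$ is a DGC map only when $P$ is a \emph{strict} $A_\infty$ morphism, which it is not in general. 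So the honest statement is that $P$, being an $A_\infty$ quasi-isomorphism, induces an $A_\infty$ coalgebra quasi-isomorphism on bar constructions, and composing with $\mathcal C(L)\xrightarrow{\simeq} B\Omega\mathcal C(L)$ gives the desired $A_\infty$ coalgebra quasi-isomorphism $\mathcal C(L)\xrightarrow{\simeq} BU(L)$, which is genuinely a DGC map only for $U_d$ by construction (and for $U_B,U_t$ when $L$ is a DGL, recovering \eqref{DGCQuasiQuillen}).

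For item $(iii)$, fix a contraction from $L$ onto $H=H_*(L)$, inducing an $L_\infty$ structure on $H$; by \cite[Thm. 3]{Bar08} (for $U_B$) and Theorem \ref{Uno}$(ii)$ together with naturality of $U_t$ on $L_\infty$ morphisms (for $U_t$), the $L_\infty$ quasi-isomorphisms $L\xrightarrow{\simeq} H$ and $H\xrightarrow{\simeq} L$ induce $A_\infty$ quasi-isomorphisms $UL\simeq UH$, and since $UH$ is minimal, $UH$ is an $A_\infty$ model for $H_*(UL)$, i.e., $U(H_*(L))\simeq H_*(UL)$. For $U_d$ this follows from item $(i)$ transported through the equivalences, and for Lada–Markl's $\mathcal U$ one invokes \cite{Lad95}: $\mathcal U$ preserves $A_\infty$ quasi-isomorphisms, so applying it to $L\xrightarrow{\simeq} H$ gives $\mathcal U(L)\simeq\mathcal U(H)$, and $\mathcal U(L)\simeq UL$ by Prop. \ref{SonHomotopas}$(i)$ extended to include $\mathcal U$ (or by a separate comparison), whence $\mathcal U(H_*(L))\simeq H_*(\mathcal U(L))\simeq H_*(UL)$.

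\textbf{Main obstacle.} The subtle point throughout is the passage between $A_\infty$ (co)morphisms and strict DGC/DGA morphisms: one must consistently use that the bar construction turns $A_\infty$ quasi-isomorphisms into $A_\infty$ coalgebra quasi-isomorphisms (and reflects them), rather than hoping for strict DGC maps, and be honest that \eqref{DGCQuasiQuillen} survives only as a weak equivalence. The second delicate point is establishing the comparison with Lada–Markl's $\mathcal U$ in item $(iii)$: unlike $U_B$ and $U_t$, $\mathcal U(L)$ is \emph{not} built on $\Lambda L$, so one cannot compare underlying complexes directly and must argue entirely through zig-zags of $A_\infty$ quasi-isomorphisms, using that all four constructions agree with the classical $UL$ when $L$ is a DGL and that each is homotopy-invariant in the appropriate sense.
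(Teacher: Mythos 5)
Your treatment of items $(i)$ and $(ii)$ follows essentially the paper's route: both are settled by the bar--cobar zig-zag of DGC quasi-isomorphisms connecting $BU_d(L)$, $\mathcal C(L)$, $B\Omega\mathcal C(L)$ and $BU_B(L)$ (the paper uses \cite[Thm. 4(ii)]{Bar08} for the leg towards $U_B$, and Thm. \ref{Uno}(ii) for $U_t\simeq U_B$). Two slips there are worth flagging, though neither is fatal to what the proposition actually claims. First, in $(i)$ you assert that two structures transferred from the same DGA along different contractions are $A_\infty$-\emph{isomorphic} "hence an isomorphism of minimal $A_\infty$ algebras": when $L$ is not minimal the transferred structures on $\Lambda L$ have $m_1\neq 0$, the composite quasi-isomorphism is not the identity on the underlying complex, and only the quasi-isomorphism survives -- which is all the statement claims outside the minimal case. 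Second, in $(ii)$ your worry is backwards: the bar construction of \emph{any} $A_\infty$ morphism is a DGC morphism (this is stated in the background section), so strictness of $P$ is not the issue; the reason the paper only obtains an $A_\infty$ coalgebra quasi-isomorphism is the wrong-way arrow $BU_B(L)\to B\Omega BU_B(L)$ in its zig-zag, which has to be inverted up to homotopy.

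The genuine gap is in item $(iii)$ for Lada--Markl's envelope $\mathcal U$. Your argument rests on two inputs that are not available: that $\mathcal U$ sends $L_\infty$ quasi-isomorphisms to $A_\infty$ quasi-isomorphisms, and that "Prop. \ref{SonHomotopas}$(i)$ extended to include $\mathcal U$" gives $\mathcal U(L)\simeq U_B(L)$. Neither is proved in \cite{Lad95}, and the second is precisely what the paper declines to claim -- note that $(i)$ deliberately omits $\mathcal U$, and the surrounding discussion (Conjecture \ref{DGCML} and the remark that $\mathcal U H_*(L)$ carries a nontrivial differential while $H_*(\mathcal U L)$ does not) makes clear that the homotopical comparison of $\mathcal U$ with $U_t,U_B,U_d$ is open. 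The missing idea is how the paper actually handles $\mathcal U$: Markl's PBW-infinity theorem \cite[Thm. 4.7]{Mar05} identifies $\mathcal U L$, as an $A_\infty$ algebra, with the free algebra $S^*(L)=\mathcal P(L)$ over a certain dg operad $\mathcal P$; one may then apply Berglund's operadic homotopy transfer \cite[Thm. 1.2]{Ber14A} to the chosen contraction of $L$ onto $H_*(L)$ to obtain a contraction of $\mathcal U L\cong S^*(L)$ onto $S^*(H_*(L))\cong \mathcal U H_*(L)$, and only then conclude $\mathcal U H_*(L)\simeq \mathcal U L\simeq H_*(\mathcal U L)$ via Thm. \ref{HTT}. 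Without an argument of this kind (or an independent proof that $\mathcal U$ preserves quasi-isomorphisms), the $\mathcal U$ case of $(iii)$ does not go through; your treatment of $U_B$, $U_t$ and $U_d$ in $(iii)$, by contrast, matches the paper's.
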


\begin{proof} $(i)$  Theorem \ref{Uno}$(ii)$ asserts that $U_B(L)\simeq U_t(L).$ It suffices to show that $U_B(L)\simeq U_d(L).$ Indeed, the construction of $U_d$ is based on the existence of a differential $d$ on $T\left(s\Lambda^+ L\right)=BU_d(L)$ so that there is a DGC quasi-isomorphism $\mathcal C (L)\xrightarrow{\simeq }BU_d(L).$ By \cite[Thm 4 (ii)]{Bar08}, there is a DGA quasi-isomorphism $\Omega \mathcal C(L)\to \Omega BU_B(L)$. Since the bar construction preserves quasi-isomorphisms, and given that the unit of the bar-cobar adjunction is a quasi-isomorphism for conilpotent coalgebras, there is the following zig-zag of DGC quasi-isomorphisms, from which the result follows: 
	\begin{equation}
		\begin{tikzcd}
			BU_d(L) & \mathcal C(L) \arrow[l] \arrow[r] & B\Omega \mathcal C(L) \arrow[r] & B \Omega B U_B(L) & BU_B(L) \arrow[l]
		\end{tikzcd}
	\end{equation} $(ii)$ Follows from the zig-zag just above.\\ $(iii)$ 	By item $(i)$, it suffices to prove it for $U=U_B$ and for $U=\mathcal U$. Let $f:L\to HL$ be an $L_\infty$ quasi-isomorphism. Since $U_B$ preserves quasi-isomorphisms, $U_B(f):U_B(L)\to U_B(HL)$ is an $A_\infty$ quasi-isomorphism. Thm. \ref{HTT} provides an $A_\infty$ algebra structure on $H(U_B(L))$, as well as an $A_\infty$ quasi-isomorphism $I:H(U_B(L)) \to U_B(L)$. Thus, the following composition is an $A_\infty$ quasi-isomorphism: $$H(U_B(L)) \xrightarrow{I} U_B(L) \xrightarrow{U_B(f)} U_B(HL).$$ Let us prove it for $\mathcal U$. Fix a contraction 
	\begin{equation}\label{Con1}
	\begin{tikzcd}
	L \ar[loop left]{l}{K} \ar[shift left]{r}{q} & H, \ar[shift left]{l}{i}
	\end{tikzcd}
	\end{equation} endow $H$ with an $L_\infty$ structure via Thm. \ref{HTT}, and denote by $\{m_n\}$ the $A_\infty$ structure on $\mathcal UL.$ Markl's PBW-infinity theorem \cite[Thm. 4.7]{Mar05} gives an isomorphism of $A_\infty$ algebras $$S^*\left(L\right) \xrightarrow{\cong} G^*\left(L\right).$$ Here, $G^*\left(L\right)$ is the associated graded $A_\infty$ algebra for the ascending filtration of $\mathcal UL$ given by $F_0=\Q, F_1=\Q\oplus L$, and for $p\geq 2:$
	\begin{equation*}
	F_pL=\operatorname{Span}_{\Q} \left\{m_n\left(x_1,.,,,.x_n\right) \mid n\geq 2, \ x_j\in F_{p_j}L, \ p_1 + \cdots + p_n \leq p\right\},
	\end{equation*} and $$S^*\left(L\right)=\mathcal F\left(L,\ell_1\right)/J$$ is the quotient of the free $A_\infty$ algebra on the chain complex $\left(L,\ell_1\right)$ by the ideal generated by imposing the vanishing on $L$ of the antisymmetrization of the $A_\infty$ structure $\left\{\mu_n\right\}$ of $\mathcal F\left(L,\ell_1\right)$ for $n\geq 2$. That is, $$\mu_n^\mathcal L\left(x_1,...,x_n\right)=0 \quad \textrm{for all } n\geq 2, \ x_i\in L.$$ Basically, $S^*$ is the "free $A_\infty$ algebra symmetrized on $L$" (not to be confused with a $C_\infty$ algebra, whose structural maps vanish on the image of the shuffle products). Denote by $\mathcal P$ the dg operad whose free algebras are given by $S^*$ (an explicit description in terms of planar trees is given in \cite[Prop. 4.6]{Mar05}). Summarizing, for any $L_\infty$ algebra $L$, there is an isomorphism of $A_\infty$ algebras $$UL\cong S^*\left(L\right),$$ where $S^*\left(L\right)=\mathcal P\left(L\right)$ is the free $\mathcal P$-algebra for a certain dg operad $\mathcal P.$ Thus, after a possible change of homotopy in the contraction from $L$ onto $H$, Berglund's generalization of the tensor trick to algebras over operads (\cite[Thm. 1.2]{Ber14A}) applies to the contraction (\ref{Con1}). That is, there is a contraction
	\begin{equation*}
	\begin{tikzcd}
	\mathcal UL\cong S^*\left(L\right) \ar[loop left]{l}{S^*\left(K\right)} \ar[shift left]{r}{S^*\left(q\right)} & S^*\left(HL\right)\cong \mathcal U HL. \ar[shift left]{l}{S^*\left(i\right)}
	\end{tikzcd}
	\end{equation*} To finish, choose any $A_\infty$ quasi-isomorphism $\mathcal UL\simeq H_*\left(\mathcal UL\right)$, for instance by using Thm. \ref{HTT}. Then, there are $A_\infty$ quasi-isomorphisms $$\mathcal UH_*\left(L\right) \xrightarrow{\simeq } \mathcal UL \xrightarrow{\simeq} H_*\left(\mathcal UL\right).$$	
\end{proof} 

\begin{remark}
	One could try to adapt Quillen's proof for DGL's in \cite[App. B]{Qui69} of the identity $HU=UH$ for $\mathcal U$. Several subtleties arise this way, and in fact, one \emph{cannot} improve Prop. \ref{SonHomotopas}  \textit{(iii)}. Indeed, any "natural" map $\mathcal U\left(HL\right) \to H\mathcal UL$ passes through a previous choice of infinity structures, thus one cannot expect an isomorphism. It gets even worst than that: no choice will ever be an isomorphism, except for the trivial case, given that by definition $\mathcal UHL$ carries a non-trivial differential, whereas $H_*\left(\mathcal UL\right)$ does not.
\end{remark}

For $\mathcal P$ a dg operad, recall that a $\mathcal P$-algebra is \emph{formal} if there exists a zig-zag of $\mathcal P$-algebra quasi-isomorphisms connecting it to its homology (\cite{Lod12}). In presence of a contraction, Lemma \ref{Principal} gives a straightforward proof of the fact that $L$ is formal as a DGL if, and only if, $UL$ is formal as a DGA. This result (\cite{Sal17}), however, has been superseded by \cite[Thm. B]{Cam19}.

We conclude this section with a conjecture. 

\begin{conjecture}\label{DGCML}
	Let $L$ be an $L_\infty$ algebra. Lada and Markl's universal enveloping $A_\infty$ algebra $\mathcal UL$ is such that there is a natural DGC quasi-isomorphism $\mathcal C(L) \xrightarrow{\simeq} B\mathcal UL.$
\end{conjecture}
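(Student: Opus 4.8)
The plan is to mimic the strategy already used in the proof of Proposition \ref{SonHomotopas}, but now keeping track of the \emph{operadic} picture that Markl's PBW-infinity theorem provides. Recall that by \cite[Thm. 4.7]{Mar05} there is a dg operad $\mathcal P$ with $\mathcal U L \cong S^*(L) = \mathcal P(L)$ naturally in $L$; the operad $\mathcal P$ is the "symmetrized free $A_\infty$" operad, and it comes equipped with an operad morphism $\mathrm{Ass}_\infty \to \mathcal P$ (so that $\mathcal U L$ is an honest $A_\infty$ algebra) and, in the other direction, with a map from the Lie side realizing the universal property, namely the strict $L_\infty$ embedding $L \hookrightarrow (\mathcal U L)^{\mathcal L}$. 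The first step is therefore to upgrade the bar construction to this operadic setting: for any $A_\infty$ algebra arising as $\mathcal P(L)$, one gets a natural DGC $B\mathcal P(L)$, and the universal property of $\mathcal U$ together with Quillen's classical quasi-isomorphism $\mathcal C(\mathfrak g) \xrightarrow{\simeq} BU\mathfrak g$ for DGLs should produce a comparison map.

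Concretely, I would proceed as follows. First, for a DGL $\mathfrak g$ we have both $\mathcal C(\mathfrak g) \xrightarrow{\simeq} BU\mathfrak g$ (classical, \eqref{DGCQuasiQuillen}) and, since $\mathcal U$ restricted to DGLs is the classical $U$ (as $S^*$ reduces to the symmetric algebra when $\ell_{\geq 3}=0$), $\mathcal U\mathfrak g = U\mathfrak g$; so the statement holds strictly on the subcategory of DGLs. Second, for a general $L_\infty$ algebra $L$, resolve it: $L$ is $L_\infty$ quasi-isomorphic to the DGL $\mathcal L\mathcal C(L)$, and the adjunction counit $\mathcal L\mathcal C(L) \to L$ (or rather the quasi-isomorphism going the other way) should be compatible, under $\mathcal U$ and under $\mathcal C$, with the bar construction. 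That is, I would build the square
\begin{equation*}
\begin{tikzcd}
\mathcal C\bigl(\mathcal L\mathcal C(L)\bigr) \arrow[r,"\simeq"] \arrow[d] & BU\bigl(\mathcal L\mathcal C(L)\bigr) \arrow[d] \\
\mathcal C(L) \arrow[r,dashed] & B\mathcal U L
\end{tikzcd}
\end{equation*}
where the left vertical is $\mathcal C$ applied to the $L_\infty$ map $\mathcal L\mathcal C(L)\to L$, the right vertical is $B\mathcal U$ applied to the same map, the top is the classical Quillen quasi-isomorphism, and the bottom is the map we want. The top and the left vertical are quasi-isomorphisms; if the right vertical is also a quasi-isomorphism (which follows because $\mathcal U$ preserves quasi-isomorphisms by \cite[Thm.~4.7]{Mar05}-type arguments, or via the operadic tensor trick \cite[Thm.~1.2]{Ber14A}, and $B$ preserves quasi-isomorphisms by \cite{Lod12}), then the induced bottom dashed arrow — which exists as a genuine DGC \emph{morphism} once one checks the square literally commutes on the nose, using $U\mathcal L\mathcal C(L) = \Omega\mathcal C(L) = \mathcal U\mathcal L\mathcal C(L)$ and naturality of bar — is forced to be a quasi-isomorphism by two-out-of-three.

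The naturality claim reduces to: (a) the classical $\mathcal C(\mathfrak g)\xrightarrow{\simeq}BU\mathfrak g$ is natural in the DGL $\mathfrak g$ (this is Quillen, \cite[App.~B]{Qui69}); (b) $\mathcal C$ and $B\circ\mathcal U$ are functorial on $L_\infty$ morphisms — for $\mathcal C$ this is standard, and for $B\circ\mathcal U$ it follows from the functoriality of $\mathcal U$ on $L_\infty$ morphisms (analogue of \cite[Thm.~3]{Bar08}, available here because $\mathcal U=\mathcal P(-)$ with $\mathcal P$ a fixed operad) composed with the bar functor; (c) the bottom map so obtained does not depend on the choice of resolution, which one checks by the usual cofinality argument among CDGC-resolutions of $L$ (any two are connected by a zig-zag of quasi-isomorphisms over $L$, and $B\mathcal U$ sends these to quasi-isomorphisms). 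The main obstacle I anticipate is step (b)--(c): verifying that the square above commutes \emph{strictly} (not merely up to homotopy) requires pinning down the PBW isomorphism $\mathcal U L \cong \mathcal P(L)$ and Quillen's map $\mathcal C\xrightarrow{\simeq}BU$ at the cochain level simultaneously, and checking they are intertwined by the PBW comparison — this is exactly the kind of compatibility that held tautologically for strict DGLs but becomes delicate for $A_\infty$ envelopes, and is precisely the reason the analogous statement fails for $U_B$ and $U_t$ (Prop.~\ref{SonHomotopas}$(ii)$). If strict commutativity fails even for $\mathcal U$, the fallback is to only claim a zig-zag, i.e. a weak equivalence of DGCs rather than a direct quasi-isomorphism — but the conjecture asserts the stronger statement, betting that Markl's operadic presentation is rigid enough to make Quillen's original argument go through verbatim.
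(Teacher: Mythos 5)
This statement is left as an \emph{open conjecture} in the paper --- no proof is offered there --- so the only question is whether your argument settles it, and it does not. The first concrete gap is your base case: you assert that $\mathcal U\mathfrak g = U\mathfrak g$ for a DGL $\mathfrak g$ (and you reuse this in the identification $\mathcal U\mathcal L\mathcal C(L) = U\mathcal L\mathcal C(L) = \Omega\mathcal C(L)$), but Markl's PBW-infinity theorem only identifies the \emph{associated graded}: $S^*(L)\cong G^*(L)=\operatorname{gr}\mathcal U L$. It does not say that Lada--Markl's envelope restricts to the classical one on DGLs, and for their construction --- a quotient of the free $A_\infty$ algebra on $L$ in which the higher free operations are not killed, only their antisymmetrizations on generators are constrained --- $\mathcal U\mathfrak g$ is not the DGA $U\mathfrak g$. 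The paper itself treats $\mathcal U$ as genuinely different from $U_t,U_B$: Prop.~\ref{SonHomotopas}(iii) needs a separate operadic-transfer argument for $\mathcal U$, and the remark following it emphasizes that no choice makes $\mathcal U$ commute with homology on the nose. So the top row of your square is not known to land in anything comparable to $B\mathcal U(\mathcal L\mathcal C(L))$ by Quillen's classical map.

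The second, independent gap is the mechanism by which you produce the bottom arrow: three sides of a commutative square do not induce the fourth. All your comparison maps emanate from the resolution $\mathcal L\mathcal C(L)$, and the left vertical $\mathcal C(\mathcal L\mathcal C(L))\to\mathcal C(L)$ is only a quasi-isomorphism, not invertible as a DGC map; what the diagram can yield is at most the zig-zag $\mathcal C(L)\xleftarrow{\simeq}\mathcal C(\mathcal L\mathcal C(L))\to B\mathcal U(\mathcal L\mathcal C(L))\to B\mathcal U L$. Even that requires the right vertical, i.e.\ $\mathcal U$ applied to the \emph{non-strict} $L_\infty$ quasi-isomorphism $\mathcal L\mathcal C(L)\to L$; the justification ``$\mathcal U=\mathcal P(-)$ with $\mathcal P$ a fixed operad'' only gives functoriality on strict morphisms, so this arrow is not constructed. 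A zig-zag of DGC quasi-isomorphisms is precisely the weaker statement that the paper explicitly separates from Conjecture~\ref{DGCML} (and is what Prop.~\ref{SonHomotopas}(ii) already provides for $U_t$ and $U_B$); the whole content of the conjecture is the existence of a \emph{natural direct} DGC quasi-isomorphism, and your closing paragraph in effect concedes that your argument cannot decide this. The proposal is therefore a reasonable strategy sketch --- reduce to DGLs, resolve, compare via Quillen's map --- but each of its load-bearing steps (the DGL base case, functoriality of $\mathcal U$ on $\infty$-morphisms, and strict commutativity of the square) is exactly what remains open, so it is not a proof.
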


If Conjecture \ref{DGCML} is true, then the universal enveloping $U_d$ studied in this section enjoys the homotopical properties of $\mathcal U$. This justifies the study of $U_d$. To finish the homotopical study of $\mathcal U$, it suffices to prove the weaker version of Conjecture \ref{DGCML} relaxing the DGC quasi-isomorphism to a zig-zag of quasi-isomorphisms.

\section{The Milnor-Moore infinity theorem and a new rational model} \label{CapiRat}

The algebraic formalism of Section \ref{Seccion2} has interesting applications to rational homotopy theory. The monograph \cite{Yve12} is an excellent resource on rational homotopy theory. In this section, all $L_\infty$ algebras are concentrated in non-negative degrees.

\subsection{The Milnor-Moore infinity theorem} \label{SeccionMM}
Let $X$ be a simply connected complex. The classical Milnor-Moore theorem (\cite{Mil65}) asserts that the rational homotopy Lie algebra $L_X=\pi_*\left(\Omega X\right) \otimes \Q$ embeds as the primitive elements of the rational loop space Hopf algebra $H_*(\Omega X;\Q)$. Furthermore, the latter Hopf algebra is precisely the universal enveloping algebra of $L_X$, and the inclusion is given by the rationalization of the Hurewicz morphism,
\begin{equation}\label{Hur}
h : \pi_*\left(\Omega X\right) \otimes \Q \hookrightarrow H_*(\Omega X;\Q)=U\left(\pi_*\left(\Omega X\right) \otimes \Q\right).
\end{equation} If only the rational homotopy Lie algebra $\pi_*\left(\Omega X\right) \otimes \Q$ is taken into account, then non-equivalent rational spaces may share this invariant. For instance, the rationalization of $\C P^2$ and of $K(\Z,2)\times K(\Z,5)$ are not equivalent, yet both have abelian two dimensional isomorphic rational homotopy Lie algebras. However, endowing an $L_\infty$ structure to $\pi_*\left(\Omega X\right) \otimes \Q$  determines a unique rational homotopy type, even if we include the class of nilpotent finite type complexes.  In this latter case, we need to restrict to finite type pronilpotent $L_\infty$ algebras. The rational homotopy type encoded by such an $L_\infty$ algebra $L$ is determined by the DGL $\mathcal L \mathcal C (L)$ in case $L=L_{\geq1}$, and by the Sullivan algebra $\mathcal C^*(L)$ in case $L=L_{\geq 0}$ is finite type pronilpotent. Here, $\mathcal C^* = \vee \circ \mathcal C$ is the linear dual $\vee$ of the Quillen chains $\mathcal C$. See \cite[Thm. 2.3]{Ber15} for details. By a beautiful result of Majewski, whenever $X$ is simply connected of finite type, these two models are homotopy equivalent (\cite{Maj00}).\\

Denote $U=U_t$. The next result lifts the morphism (\ref{Hur}) to the context of infinity algebras.

\begin{theorem}\label{MMInfinito} Let $X$ be a simply connected complex. Endow $\pi_*\left(\Omega X\right)\otimes \Q$ with an $L_\infty$ structure $\{\ell_n\}$ representing the rational homotopy type of $X$ for which $\ell_1=0$ and $\ell_2=[-,-]$ is the Samelson bracket. Then, there is an $A_\infty$ algebra structure $\{m_n\}$ on the loop space homology algebra $H_*\left(\Omega X;\Q\right)$ for which $m_1=0, m_2$ is the Pontryagin product, and such that the rational Hurewicz morphism 
\begin{equation*}
h : \pi_*\left(\Omega X\right) \otimes \Q \hookrightarrow H_*(\Omega X;\Q)=U\left(\pi_*\left(\Omega X\right) \otimes \Q\right)
\end{equation*} is a strict $L_\infty$ embedding. Therefore, the $L_\infty$ structure on the rational homotopy Lie algebra is the antisymmetrized of the $A_\infty$ structure on $H_*(\Omega X;\Q)$: $$\ell_n(x_1,...,x_n)=\sum_{\sigma\in S_n} \chi(\sigma) m_n\left(x_{\sigma(1)},...,x_{\sigma(n)}\right).$$
\end{theorem}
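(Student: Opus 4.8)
The plan is to reduce Theorem~\ref{MMInfinito} to Theorem~\ref{Uno} via the standard rational-homotopy-theoretic dictionary between spaces, DGLs, and their homology. First I would fix a Quillen model of $X$: since $X$ is simply connected, there is a free DGL $(\Lie(V),\partial)$ whose homology is the rational homotopy Lie algebra $\pi_*(\Omega X)\otimes\Q$, with $\ell_2$ on homology the Samelson bracket. The classical theory (Quillen, Milnor--Moore, and the quasi-isomorphism \eqref{DGCQuasiQuillen}) gives that the universal enveloping DGA $U(\Lie(V),\partial)$ is a model for the chains on $\Omega X$, so that $H_*\big(U(\Lie(V),\partial)\big)\cong H_*(\Omega X;\Q)$ as algebras, with the induced product being the Pontryagin product.

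Next I would apply Theorem~\ref{Uno} directly to the DGL $L=(\Lie(V),\partial)$. Choosing a contraction of $L$ onto its homology $H=\pi_*(\Omega X)\otimes\Q$ induces an $L_\infty$ structure $\{\ell_n\}$ on $H$ with $\ell_1=0$ and $\ell_2$ the Samelson bracket; by general theory this $L_\infty$ algebra represents the rational homotopy type of $X$ (it is quasi-isomorphic to $L$, hence equivalent as a model). Theorem~\ref{Uno}(i) then produces an explicit contraction of $UL$ onto $\Lambda H\cong H_*(UL)\cong H_*(\Omega X;\Q)$, an induced $A_\infty$ structure $\{m_n\}$ on $H_*(\Omega X;\Q)$, and the assertion that the PBW inclusion $\imath\colon(H,\{\ell_n\})\hookrightarrow(\Lambda H,\{m_n^{\mathcal L}\})$ is a strict $L_\infty$ embedding. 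Unwinding, the antisymmetrization identity $\ell_n(x_1,\dots,x_n)=\sum_{\sigma\in S_n}\chi(\sigma)m_n(x_{\sigma(1)},\dots,x_{\sigma(n)})$ holds on $H$, which is exactly the conclusion of the theorem. I also need $m_1=0$ (automatic, since $H$ carries the zero differential and $\{m_n\}$ is the transferred minimal structure) and $m_2$ equal to the Pontryagin product; the latter follows because $m_2$ on $\Lambda H$ is the transferred product from $UL$, and under $H_*(UL)\cong H_*(\Omega X;\Q)$ the transferred product is precisely the homology product, i.e.\ the Pontryagin product.

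The remaining point is to identify the Hurewicz morphism $h$ with the PBW map $\imath$. Here I would argue that, after rationalization, the Hurewicz map $\pi_*(\Omega X)\otimes\Q\to H_*(\Omega X;\Q)$ is the map on homology induced by the DGL inclusion $L\hookrightarrow UL$ composed with the identification $H_*(UL)\cong H_*(\Omega X;\Q)$ — this is the content of the classical Milnor--Moore theorem, \eqref{Hur}. On homology, the map induced by $L\hookrightarrow UL$ together with the chosen contractions is exactly the PBW map $\imath$ of Theorem~\ref{Uno}. Hence $h=\imath$ is a strict $L_\infty$ embedding, and $H_*(\Omega X;\Q)=U_t\big(\pi_*(\Omega X)\otimes\Q\big)$ by Definition~\ref{DefUnive}.

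\textbf{Main obstacle.} I expect the delicate step to be the compatibility of the chosen contractions with the topological/geometric identifications: one must ensure that the contraction of $L$ onto $H$ used to define $\{\ell_n\}$, the induced contraction of $UL$ onto $\Lambda H$ from Theorem~\ref{Uno}, and the algebra isomorphism $H_*(UL)\cong H_*(\Omega X;\Q)$ are all mutually coherent, so that the transferred $m_2$ really is the Pontryagin product (not merely isomorphic to it) and $h$ really is $\imath$ on the nose rather than up to $A_\infty$ automorphism. Settling this amounts to checking that the PBW morphism of contractions in the proof of Theorem~\ref{Uno} is the one realizing the Milnor--Moore inclusion, which is where the input from \cite{Mil65,Qui69} and the quasi-isomorphism \eqref{DGCQuasiQuillen} is essential.
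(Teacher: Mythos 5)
Your proposal is correct and follows essentially the same route as the paper: fix the Quillen minimal model $(\Lie(V),\partial)$ of $X$, transfer the $L_\infty$ structure to $\pi_*(\Omega X)\otimes\Q$ via a chosen contraction, apply Theorem~\ref{Uno} (via Lemma~\ref{Principal}) to the PBW morphism of contractions $\Lie(V)\hookrightarrow U\Lie(V)$, and identify the rational Hurewicz map with $\widehat\imath = p\,\imath\, i$ using the classical Milnor--Moore theorem. The coherence issue you flag as the main obstacle is exactly what the paper resolves by making the PBW inclusion a morphism of contractions in the proof of Theorem~\ref{Uno}, so that $h$ and the transferred structure are compatible on the nose.
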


\begin{proof} Assume that the rational homotopy Lie algebra $\pi_*\left(\Omega X\right)\otimes \Q$ carries a minimal $L_\infty$ structure $\{\ell_n\}$ governing the rational homotopy type of $X$ for which $\ell_2$ is the Samelson bracket. For instance, from a CW-decomposition $$* =X^{(1)} \subseteq X^{(2)} \subseteq \cdots \subseteq \bigcup_n X^{(n)}=X,$$ build the Quillen minimal model $L=\left(\Lie(V),\partial\right)$ of $X$, satisfying $$H_*\left(L\right) \cong \pi_*\left(\Omega X\right)\otimes \Q$$ as graded Lie algebras. The choice of a contraction from $L$ onto $\pi_*\left(\Omega X\right)\otimes \Q$ gives an $L_\infty$ structure as in the statement. The rational Hurewicz homomorphism of equation (\ref{Hur}) is, after the choice of an ordered basis of $L$, the PBW map from $L$ into $UL$. Therefore, $h$ can be chosen to be $h=\widehat \imath = p \imath i$ in the following diagram, which is under the hypotheses of Theorem \ref{Principal}:
\begin{equation*}
\begin{tikzcd}
TV=U\left(\Lie(V)\right) \ar[loop left]{l}{G} 	 \ar[shift left]{r}{p}	& H_*\left(\Omega X;\Q\right)  \ar[shift left]{l}{j}\\[0.35cm]
\Lie(V)	\ar[loop left]{l}{K} 	\ar[shift left]{r}{q} \ar[hook]{u}{\imath}			& \pi_*\left(\Omega X\right)\otimes \Q  \ar[shift left]{l}{i} \ar[hook,swap]{u}{h}
\end{tikzcd}
\end{equation*} An application of Theorem \ref{Principal} finishes the proof.
\end{proof}

\begin{remark}\label{Primitivos}
	Let $U_t(L)=(\Lambda L, \{m_n\})$ be the universal enveloping $A_\infty$ algebra of $(L,\{\ell_n\})$. For each $n$, the composition 
	\begin{equation*}
	\begin{tikzcd}
	L^{\otimes n} \arrow[r, "i_n", hook] & \left(\Lambda L\right)^{\otimes n} \arrow[r, "m_n^\mathcal L"] & \Lambda L
	\end{tikzcd}
	\end{equation*} has its image in $L\subseteq \Lambda L$. Let $\pi:\Lambda L \to L$ be the projection. The primitives of $\Lambda L$ for the  standard coproduct are precisely $\mathcal P_*(\Lambda L)=L.$ Thus, the original $L_\infty$ structure can be recovered by performing two natural operations to $U_t(L)$: antisymmetrizaton and restriction to primitives.
	\begin{equation*}
	\left(\Lambda L, \{m_n\}\right) \  \longmapsto \ \left( \mathcal P_*(\Lambda L), \ \pi \circ m_n^{ \mathcal L} \circ i_n\right) = (L,\{\ell_n\}).
	\end{equation*}
\end{remark}

Detecting when a given cocommutative Hopf algebra is the universal envelope of its primitives is a difficult problem. This has been studied, among others,  by Anick, Cartier, Halperin, Kostant, Milnor and Moore. See for example \cite{Hal92}. The classical name of this sort of result is the \emph{Cartier-Milnor-Moore theorem}. Does a similar statement hold in the infinity setting? 

\begin{conjecture}
	Let $A$ be an $A_\infty$ algebra over a characteristic zero field such that there is a cocommutative, conilpotent coproduct $\Delta$ on $A$ which is a strict $A_\infty$ morphism $A\to A^{\otimes 2}$. Then, the primitives for the coproduct $L=\Ker(\overline \Delta) = \mathcal P_*(A)$ form an $L_\infty$ algebra, and the inclusion $L\hookrightarrow A$ extends to an isomorphism of $A_\infty$ algebras $$UL\xrightarrow{\cong} A$$ which respects the Hopf structure.
\end{conjecture}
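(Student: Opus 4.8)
The plan is to prove this by mimicking the classical Cartier--Milnor--Moore argument, but phrased in terms of the transfer machinery of Section~\ref{Seccion2}. First I would fix a splitting of $A$ compatible with the coproduct: since $\Delta$ is cocommutative and conilpotent and is a strict $A_\infty$ map, the reduced coproduct $\overline\Delta$ is a strict $A_\infty$ map $A\to A^{\otimes 2}$ whose kernel $L=\mathcal P_*(A)$ is a sub-$A_\infty$-subobject for the antisymmetrized structure. Concretely, I would show that the composition $\pi\circ m_n^{\mathcal L}\circ i_n \colon L^{\otimes n}\to L$ (where $\pi\colon A\to L$ is any coalgebra projection onto primitives, which exists because $A$ is conilpotent over a characteristic zero field, so $A$ is a tensor coalgebra on $L$ as a coalgebra by the structure theorem for conilpotent cocommutative coalgebras) lands in $L$ and satisfies the $L_\infty$ identities. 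That the $m_n^{\mathcal L}$ of elements of $L$ are again primitive follows from $\overline\Delta$ being a strict $A_\infty$ morphism and a short diagram chase exactly as in Remark~\ref{Primitivos}; that the generalized Jacobi identities hold is forced by the Stasheff identities on $A$ together with skew-symmetry, since antisymmetrization is a functor to $L_\infty$ algebras.

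The second and main step is to build the comparison map $UL\to A$ and show it is an isomorphism. Here $UL$ means $U_t(L)=(\Lambda L,\{m'_n\})$. The natural candidate is: write $A\cong \Lambda L$ as a graded vector space via the cocommutative coalgebra structure theorem (this is the infinity PBW isomorphism, and it is already how $U_t$ is defined), and check that the transported $A_\infty$ structure on $\Lambda L$ coming from $A$ agrees, up to $A_\infty$ isomorphism, with the one produced by Definition~\ref{DefUnive}. For this I would invoke Lemma~\ref{Principal} in the following guise: the inclusion $L\hookrightarrow A$ is a strict $L_\infty$ morphism into $A^{\mathcal L}$ by construction of the $L_\infty$ structure on $L$, and Lemma~\ref{Principal} (in the form used in the proof of Theorem~\ref{Uno}) says precisely that transferring through a morphism of contractions commutes with passing to antisymmetrizations and to universal envelopes. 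Combined with the uniqueness-up-to-isomorphism clause of the homotopy transfer theorem (Theorem~\ref{HTT}), and with Proposition~\ref{SonHomotopas}$(i)$ identifying $U_t$ with Baranovsky's construction (whose PBW property is established in \cite{Bar08}), one gets a canonical $A_\infty$ isomorphism $U_t(L)\xrightarrow{\cong} A$ restricting to the inclusion in degree-one component. Finally, compatibility with the Hopf structure should be automatic: both coproducts are the unique cocommutative conilpotent ones extending the zero coproduct on generators $L$, and an $A_\infty$ isomorphism that is the identity on $L$ and a coalgebra map on the underlying spaces must intertwine them.

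The hard part will be making precise the sense in which $A^{\mathcal L}$ has $L$ as a \emph{deformation retract} so that Lemma~\ref{Principal} can be applied: the classical CMM argument uses an averaging/symmetrization idempotent (the Eulerian idempotent, or Friedrichs' criterion) to produce the projection $A\to L$, and one must check that this idempotent is compatible enough with the $A_\infty$ structure to yield an honest contraction of chain complexes rather than merely a vector-space splitting --- and that after a possible change of the homotopy $K$ (as in the proof of Proposition~\ref{SonHomotopas}$(iii)$, citing \cite{Ber14A}) it becomes a contraction to which the tensor-trick transfer applies. A related subtlety is that $A$ need not be minimal, so $L=\mathcal P_*(A)$ carries a nontrivial $\ell_1=m_1|_L$, and the coalgebra structure theorem must be applied with differentials; here one should first reduce to the minimal model of $A$ using that the bar construction preserves quasi-isomorphisms and that primitives are a homotopy-invariant notion for conilpotent coalgebras, then transport the conclusion back. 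Once these two points are handled, the rest is the formal bookkeeping already carried out in the proofs of Theorem~\ref{Uno} and Proposition~\ref{SonHomotopas}.
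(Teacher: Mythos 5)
Note first that the statement you are proving is stated in the paper as a \emph{conjecture}: the author gives no proof, and even flags that the correct formulation may require Lada--Markl's envelope $\mathcal U$ rather than $U_t$ and a diagonal coming from a ``Hopf algebra up to homotopy''. In particular, your proposal never addresses the first real issue: what $A^{\otimes 2}$ means as an $A_\infty$ algebra. There is no canonical $A_\infty$ structure on the tensor product of two $A_\infty$ algebras (a choice of diagonal on the $A_\infty$ operad is needed, and no choice is coassociative and cocommutative on the nose), so ``$\Delta$ is a strict $A_\infty$ morphism $A\to A^{\otimes 2}$'' must be pinned down before any diagram chase about primitives can even be written; this is exactly the subtlety the paper points to.

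The central gap, however, is your appeal to a ``structure theorem for conilpotent cocommutative coalgebras'' to write $A\cong \Lambda L$ (you say ``tensor coalgebra'', which is in any case the wrong cofree object for cocommutative coalgebras) on its primitives $L=\Ker(\overline\Delta)$. No such theorem holds for coalgebras alone: a conilpotent cocommutative coalgebra need not be cofree on its primitives (e.g.\ a finite-dimensional coalgebra with one primitive is not $\Lambda$ on that primitive). Classically, the isomorphism $A\cong \Lambda \mathcal P_*(A)$ is a \emph{consequence} of Cartier--Milnor--Moore/PBW, and its proof uses the full Hopf compatibility of product and coproduct to construct the symmetrization (Eulerian) idempotent and hence the retraction onto primitives. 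Establishing the analogous statement when the compatibility is only ``$\Delta$ is a strict $A_\infty$ morphism'' is precisely the content of the conjecture, so this step assumes what is to be proven; and you yourself defer the construction of the contraction (``the hard part'') that Lemma~\ref{Principal} would require. Two further steps would also fail as written: the closure of the operations $m_n^{\mathcal L}$ on $L$ does not follow ``as in Remark~\ref{Primitivos}'', since that remark exploits the specific transferred structure of $U_t(L)$ on $\Lambda L$, not an arbitrary $A$; and even granting a contraction, Lemma~\ref{Principal}, Theorem~\ref{HTT} and Proposition~\ref{SonHomotopas}$(i)$ only yield $A_\infty$ \emph{quasi-isomorphisms} --- the uniqueness clause of Theorem~\ref{HTT} concerns transferred structures on a fixed complex, and upgrading a quasi-isomorphism to the asserted isomorphism $UL\xrightarrow{\cong}A$ requires minimality, which $A$ is not assumed to have. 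Finally, the Hopf-compatibility claim (``the unique cocommutative conilpotent coproduct extending zero on $L$'') is not a valid uniqueness argument. So the proposal, while it correctly identifies the relevant tools of Section~\ref{Seccion2}, leaves open exactly the points that make this a conjecture rather than a theorem.
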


 In the conjecture above, we expect $U$ to be Lada and Markl's envelope, and maybe the diagonal $\Delta$ needs to come from a "Hopf algebra up to homotopy", so that the isomorphism might be not only of $A_\infty$ algebras, but of homotopy Hopf algebras. 
 If $X$ is a simply connected complex, and $H_*(\Omega X;\Q)$ carries a universal enveloping $A_\infty$ structure, then $H_*(\Omega X;\Q)$ is a rational model for $X$. Indeed, by Remark \ref{Primitivos}, $$\mathcal P_*\left(H_*(\Omega X;\Q)\right) = \pi_*\left(\Omega X\right)\otimes \Q$$ is a fully-fledged $L_\infty$ algebra capturing the rational homotopy type of $X$.

\subsection{Examples. Recovering the Sullivan and Quillen models}\label{Ejemplos}

We explicitly record several examples of universal enveloping $A_\infty$ algebras of the sort  $$U_t\left(\pi_*\left(\Omega X\right)\otimes \Q , \left\{\ell_n\right\}\right)=\left(H_*(\Omega X; \Q), \{m_n\}\right).$$

\begin{enumerate}
\item \textbf{The simply connected sphere $S^n$}.
\begin{itemize}
\item For odd $n$, it is $\Lambda x$ with $|x|=n-1$, with trivial differential and trivial higher multiplications of all orders. 
\item For even $n$, it is $\Lambda (x,y)$ with $|x|=n-1$, $|y|=2n-2$, with a unique non-trivial multiplication map given by $m_2(x,x)= \frac{1}{2}y.$
\end{itemize}
\item \textbf{A finite product of simply-connected Eilenberg-Mac Lane spaces  $\prod_{i=1}^k K(\Q,n_i)$}.\\ It is given by \begin{center} $\left(\Lambda x_1,...,x_k\right)$, where each $|x_i|= n_i-1$, \end{center} with trivial differential and higher multiplications of all orders. 
\item \textbf{The complex projective spaces $\C P^k$, for $k\geq 1$.} \\ It is given by $\left( \Lambda x,y\right)$, with $|x|=1$, $|y|=2k$ and its only non-trivial higher multiplication is $$m_{k+1}(x,...,x)=\frac{1}{(k+1)!^2}y.$$ Indeed, an $L_\infty$ model $L=\pi_*(\Omega \C P^k)\otimes \Q$ of $\C P^k$ has a basis $\{x,y\}$ with $|x|=1, |y|= 2k$ with a single non-vanishing higher bracket, given by $\ell_{k+1}(x,...,x)=\frac{1}{(k+1)!}y.$ The result then follows, since the sign $\chi(\sigma)$ in the sum below is always positive:
\begin{equation*}
\frac{1}{(k+1)!}y =  \ell_{k+1}(x,...,x) = \sum_{\sigma \in S_{k+1}} \chi(\sigma) m_{k+1}(x,...,x)  = (k+1)!m_{k+1}(x,...,x).
\end{equation*}
\item \textbf{Coformal spaces}. \\ The universal enveloping $A_\infty$ algebra model of any coformal space can be chosen to be the classical universal enveloping algebra of it. Indeed, if $X$ is coformal,  then $L=\pi_*(\Omega X)\otimes \Q$ together with $\ell_2$ given by the Samelson product is an $L_\infty$ model of $X$. Since $L$ is a DGL with trivial differential, the universal enveloping $A_\infty$ algebra of it coincides with the classical envelope, having the latter trivial differential as well. This includes examples 1 and 2.
\end{enumerate}

Let $U_t(L)= \left( \Lambda L, \{m_n\}\right)$ be universal enveloping $A_\infty$ model of a simply connected complex $X$. Let  $L=\mathcal P_ *\left(H_*\left(\Omega X;\Q\right)\right)$ be the primitives for the natural diagonal (Rmk. \ref{Primitivos}). Then, one recovers:
\begin{itemize}
\item Provided $X$ is of finite type, a (not necessarily minimal) Sullivan model $\left(\Lambda V,d\right)$ of $X$ by setting $V=\left(sL \right)^{\vee}$ and $d=\sum_{n \geq 1}d_n$ determined by the pairing
\begin{equation}\label{Pairing}
\left \langle d_n(v), sx_1 \wedge ... \wedge sx_n \right\rangle = \varepsilon \sum_{\sigma \in S_n} \chi(\sigma) \left\langle v; sm_n\left( x_{\sigma(1)},...,x_{\sigma(n)} \right) \right \rangle,
\end{equation} where $\varepsilon$ is the parity of $\sum_{j=1}^{n-1}(n-j)|x_j|.$ 
\item A (not necessarily minimal) Quillen model by setting 
\begin{equation*}
\left( \Lie(U),\partial \right) = \left( \Lie\left(s^{-1}\Lambda^+ sL\right),\partial_1+\partial_2 \right) = \mathcal L \mathcal C \left(\mathcal P_ *\left(H_*\left(\Omega X;\Q\right)\right), \{m_n^{\mathcal L}\}\right).
\end{equation*} The quadratic part $\partial_2$ of the differential is the standard induced by the reduced  coproduct of $\mathcal{C}(L)$ (see formula (\ref{FormulaPartial})), and $\partial_1$ is explicitly given on generators by
\begin{align*}
\partial_1 \big(s^{-1} \big( sx_1 \wedge &... \wedge sx_p \big) \big)  \\& = \ \sum_{k=1}^p \sum_{i_1 \leq \cdots \leq i_k} \sum_{\sigma \in S_k} \varepsilon^\sigma_{(i_1,...,i_k)} s^{-1} \left(sm_k\left(x_{i_{\sigma(1)}},...,x_{i_{\sigma(k)}}\right) \wedge sx_1 ... \widehat{sx_{i_1}}...\widehat{sx_{i_k}}... \wedge sx_{i_p}\right).
\end{align*} The sign $\varepsilon^\sigma_{(i_1,...,i_k)}=-\varepsilon (-1)^{n_{i_1...i_k}}\chi(\sigma)$ is given by the Koszul sign rule and the involved maps. 
\end{itemize}

\subsection{Higher Whitehead products and Pontryagin-Massey products}\label{SeccionHigherPontryagin}

Several authors have related the (ordinary, as well as higher) Whitehead products $[-,-]$ on $\pi_*(X)$ with the Pontryagin product $*$ on $H_*\left(\Omega X;R\right)$. For instance, the main result in  \cite{Sam53} states that the two-fold Whitehead product  of $x\in \pi_{n+1}$ and $y\in \pi_{m+1}$  is an antisymmetrized Pontryagin product: 
\begin{equation*}
h[x,y] =(-1)^{n} \left(h(x)*h(y)-(-1)^{nm}h(y)*h(x)\right).
\end{equation*} Here, $h:\pi_*(X)\xrightarrow{\cong} \pi_{*-1}(\Omega X) \to H_{*-1}(\Omega X;\Z)$ is the Hurewicz morphism precomposed with an isomorphism. In \cite[Thm 3.3]{Ark71}, it is shown that under some hypothesis, certain higher order Whitehead product sets $[x_1,...,x_k]_W\subseteq \pi_{*}(X)$ are non-empty, and contain an element which is a sort of generalized $k$-fold Pontryagin product. \\

In the rational case, Thm. \ref{MMInfinito} is the most general form of these sort of statements. Assuming the existence of non-trivial higher products in a sense to be explained, one can go a step further and extract an interesting relationship. For space considerations, and since this section is about an application of the main results of this work, we omit a (necessarily lengthy) explanation of the necessary background. Instead, we refer the reader to \cite{Tan83} for background on the (rational) higher order Whitehead products, and to \cite{HigherWhitehead} for an account of their relationship with $L_\infty$ structures. We start with the following observation.

\begin{proposition}
	Let $X$ be a simply connected complex. The $A_\infty$ algebra structures on $H_*(\Omega X;\Q)$ arising from exhibiting $H_*(\Omega X;\Q)$ as a contraction of the chains DGA $C_*(\Omega X;\Q)$ and by taking universal enveloping $A_\infty$ algebra of an $L_\infty$ model on $\pi_*(\Omega X)\otimes \Q$ are $A_\infty$ quasi-isomorphic.
\end{proposition}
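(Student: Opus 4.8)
The plan is to connect both $A_\infty$ structures on $H_*(\Omega X;\Q)$ through a chain of $A_\infty$ quasi-isomorphisms, exploiting the fact that all the relevant objects sit over the same DGA, namely the singular chains $C_*(\Omega X;\Q)$. First I would recall the classical fact that $C_*(\Omega X;\Q)$ is quasi-isomorphic as a DGA to $UL$, where $L$ is a Quillen model of $X$: indeed, Adams' cobar equivalence together with Quillen's theory gives a zig-zag of DGA quasi-isomorphisms connecting $C_*(\Omega X;\Q)$ to $U(\mathcal L\mathcal C(L))=\Omega\mathcal C(L)$, and the latter is quasi-isomorphic to $UL$ for any Quillen model $L$ since $\mathcal L\mathcal C(L)\simeq L$ as DGLs and $U$ preserves quasi-isomorphisms. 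Here one must be a little careful to use the finite-type or simply connected hypotheses where the bar-cobar comparison needs them, but these are standing assumptions.

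Next I would invoke the homotopy transfer theorem (Thm. \ref{HTT}): the first $A_\infty$ structure in the statement is, by definition, the one transferred from $C_*(\Omega X;\Q)$ onto its homology $H_*(\Omega X;\Q)$ along some chosen contraction, and it comes equipped with an $A_\infty$ quasi-isomorphism to $C_*(\Omega X;\Q)$. On the other hand, the second $A_\infty$ structure is $U_t(\pi_*(\Omega X)\otimes\Q,\{\ell_n\})=(\Lambda L,\{m_n\})$ for an $L_\infty$ model, which by Def. \ref{DefUnive} (and Thm. \ref{Uno}) arises as a transfer of $\Omega\mathcal C(L)$ onto $\Lambda L\cong H_*(\Omega\mathcal C(L))$, hence is $A_\infty$ quasi-isomorphic to $\Omega\mathcal C(L)$. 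Stringing these together:
\begin{equation*}
(H_*(\Omega X;\Q),\{m_n^{\mathrm{transfer}}\}) \xrightarrow{\simeq} C_*(\Omega X;\Q) \xleftarrow{\simeq} \cdots \xrightarrow{\simeq} \Omega\mathcal C(L) \xleftarrow{\simeq} U_t(L).
\end{equation*}
Since $A_\infty$ quasi-isomorphisms admit quasi-inverses (the category of $A_\infty$ algebras with quasi-isomorphisms inverted is a groupoid, equivalently one may pass to the bar construction where everything becomes honest DGC quasi-isomorphisms), this zig-zag collapses to a direct $A_\infty$ quasi-isomorphism between the two structures. One should also check that the $H_*(\Omega X;\Q)$ appearing on both ends is the same graded vector space with the same $m_1=0$ and the same $m_2$ equal to the Pontryagin product; this is immediate since the Pontryagin product on $H_*(\Omega X;\Q)$ is induced by the DGA structure on $C_*(\Omega X;\Q)$, and both transferred $A_\infty$ structures induce it in degree two.

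The main obstacle is bookkeeping around the comparison $C_*(\Omega X;\Q)\simeq \Omega\mathcal C(L)$: one needs the classical chain of quasi-isomorphisms (Adams, Quillen, and the identification $U(\mathcal L\mathcal C(L))=\Omega\mathcal C(L)$) to be valid as a zig-zag of \emph{DGA} (or at least $A_\infty$-algebra) quasi-isomorphisms, not merely of complexes, and to hold under the simply connected hypothesis without extra finiteness assumptions. This is standard but worth stating precisely; once it is in place, the rest is a formal manipulation of $A_\infty$ quasi-isomorphisms and the homotopy transfer theorem. An alternative, perhaps cleaner route avoiding Adams entirely: both $A_\infty$ structures are minimal models of the DGA $C_*(\Omega X;\Q)$ in the sense that each admits an $A_\infty$ quasi-isomorphism from a DGA quasi-isomorphic to $C_*(\Omega X;\Q)$ — for the transferred one this is immediate, and for $U_t(L)$ one uses that $\Omega\mathcal C(L)$ is a DGA model of $C_*(\Omega X;\Q)$ — and minimal $A_\infty$ models of a fixed (quasi-isomorphism class of) DGA are unique up to $A_\infty$ isomorphism, hence a fortiori $A_\infty$ quasi-isomorphic.
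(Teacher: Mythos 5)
Your proposal is correct and takes essentially the same route as the paper: both arguments assemble a zig-zag of $A_\infty$ quasi-isomorphisms joining the two transferred structures through a DGA model of $C_*(\Omega X;\Q)$ given by a universal envelope of a Lie model of $X$ — the paper via Quillen's equivalence $U\lambda(X)\simeq C_*(\Omega X;\Q)$ applied to the classical envelope of the Quillen model $\Lie(U)$, you via the Adams–Quillen identification $C_*(\Omega X;\Q)\simeq \Omega\mathcal C(L)=U\mathcal L\mathcal C(L)$, which is only a cosmetic difference in the middle term. Your closing remark that one can instead invoke uniqueness of minimal $A_\infty$ models of a fixed quasi-isomorphism class is a valid shortcut, but it does not change the substance of the argument.
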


\begin{proof}
	Let $L= \left(\pi_*(\Omega X)\otimes \Q, \{\ell_n\}\right)$ be the $L_\infty$ model of $X$, and assume without loss of generality that $L$ arises as a contraction of the Quillen model $(\Lie(U),\partial)$ of $X$. Denote by $\{m_n\}$ the $A_\infty$ structure on $H_*(\Omega X;\Q)$ arising from Thm. \ref{Uno}. There is a square 
	
	\begin{equation*}
	\begin{tikzcd}
	U\left(\Lie(V)\right)  	 \ar[shift left]{r}{\simeq}	& H_*\left(\Omega X;\Q\right) \\[0.35cm]
	\Lie(V)		\ar[shift left]{r}{\simeq} \ar[hook]{u}{}			& \pi_*\left(\Omega X\right)\otimes \Q   \ar[hook,swap]{u}{}
	\end{tikzcd}
	\end{equation*}
	whose horizontal top and bottom arrows are $A_\infty$ and $L_\infty$ quasi-isomorphisms, respectively. Since there is a DGL quasi-isomorphism $\Lie(U) \xrightarrow{\simeq} \lambda(X)$ onto the Quillen construction $\lambda(X)$ (\cite{Qui69}), and the classical enveloping functor $U$ preserves quasi-isomorphisms (\cite[Thm. 21.7]{Yve12}), there is a DGA quasi-isomorphism $U\Lie(U) \xrightarrow{\simeq} U\lambda(X)$. Since $U\lambda(X)$ is weakly equivalent to $C_*(\Omega X;\Q)$ as a DGA, there is an $A_\infty$ quasi-isomorphism $U\lambda(X) \xrightarrow{\simeq} (H_*(\Omega X;\Q), \{m_n'\})$ for $\{m_n'\}$ induced by exhibiting $H_*(\Omega X;\Q)$ as a contraction of $C_*(\Omega X;\Q)$.
\end{proof}

The Massey products of a space $X$ are certain higher order operations on the cohomology algebra $H^*(X;R)$. These arise from relations between the cup product and the differential in the singular cochains $C^*(X;R)$, see  \cite{Mas58,May69}. The Massey products and the $A_\infty$ structures on $H^*(X;R)$ are tightly related, see \cite{Bui18} for details. Both, the Massey products and $A_\infty$ structure, exist in the homology $H$ of any DGA $A$ - one needs not consider these operations only when $A$ is the singular cochain algebra of a space. So, given that $H_*(\Omega X;R)$ is the homology of the DGA $C_*(\Omega X;R)$ for the Pontryagin product, it makes sense to consider the algebraic Massey products on $H_*(\Omega X;R)$. We call these higher products on $H_*(\Omega X;R)$ arising from relations between the Pontryagin product and the differential of the DGA $C_*(\Omega X;R)$ the \emph{higher Massey-Pontryagin products} of $X$. This way, we avoid the confusion with the classical Massey products of $X$. Again for space considerations, we refer the reader to the   works mentioned in this paragraph for the necessary background on Massey products and $A_\infty$ structures. \\

Denote by $h:\pi_*\left( \Omega X\right)\otimes \Q \to H_*\left(\Omega X;\Q\right)$ the rational Hurewicz morphism. 

\begin{theorem}\label{HigherWhiteheadHigherPontryagin} Let $x_1,...,x_n \in \pi_*\left( \Omega X\right)\otimes \Q$, and denote by $y_k=h\left(x_k\right)\in H_*\left(\Omega X;\Q\right)$ the corresponding spherical classes. Assume that the higher Whitehead product set $\left[x_1,...,x_n\right]_W$ and the higher Massey-Pontryagin products sets $\left\langle y_{\sigma(1)},...,y_{\sigma(n)} \right\rangle$ for every $\sigma \in S_n$ are defined. If the $A_\infty$ algebra structure $\{m_k\}$ on $H_*\left(\Omega X;\Q\right)$ provided by Thm. \ref{MMInfinito} has vanishing $m_k$ for $k\leq n-2$, then $x=\varepsilon\ell_n\left(x_1,...,x_n\right)\in \left[x_1,...,x_n\right]_W,$ and satisfies:
\begin{equation*}
h(x) \in \sum_{\sigma \in S_n} \chi(\sigma) \left\langle y_{\sigma(1)},...,y_{\sigma(n)}\right\rangle.
\end{equation*} Here, $\varepsilon$ is the parity of $\sum_{j=1}^{n-1} |x_j|(k-j)$. If moreover the involved higher products are all uniquely defined, then the above containment is an equality of elements.
\end{theorem}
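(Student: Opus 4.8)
\textbf{Proof proposal for Theorem \ref{HigherWhiteheadHigherPontryagin}.}

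The plan is to run two parallel arguments — one on the $L_\infty$ side and one on the $A_\infty$ side — and glue them together through the strict $L_\infty$ embedding $h$ of Theorem \ref{MMInfinito}. First I would recall the standard dictionary between higher operations and infinity structures: under the hypothesis $\ell_k = 0$ (resp. $m_k = 0$) for $k \leq n-2$, the first potentially nonzero higher bracket $\ell_n$ (resp. the first potentially nonzero higher multiplication $m_n$) is a genuine (co)cycle-valued operation and, after transfer, its value on given cycles represents an element of the corresponding higher product set. Concretely, by the results of \cite{HigherWhitehead} relating $L_\infty$ brackets on $\pi_*(\Omega X)\otimes\Q$ to rational higher Whitehead products, the element $x = \varepsilon\,\ell_n(x_1,\dots,x_n)$ — with $\varepsilon$ the stated suspension sign converting the Samelson-style bracket on $\pi_*(\Omega X)$ to the Whitehead bracket on $\pi_*(X)$ — lies in $[x_1,\dots,x_n]_W$; this is exactly the vanishing-of-lower-brackets hypothesis that makes the higher Whitehead set nonempty and makes $\ell_n$ land in it. Symmetrically, by the standard correspondence between $A_\infty$ structures on the homology of a DGA and Massey products (here the DGA being $C_*(\Omega X;\Q)$ with the Pontryagin product, so that "Massey product" means "Massey-Pontryagin product"), the vanishing $m_k = 0$ for $k \leq n-2$ forces $m_n(y_{\sigma(1)},\dots,y_{\sigma(n)}) \in \langle y_{\sigma(1)},\dots,y_{\sigma(n)}\rangle$ for each permutation $\sigma$.

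Second, I would apply Theorem \ref{MMInfinito}: the morphism $h$ is a strict $L_\infty$ embedding, so $h\ell_n(x_1,\dots,x_n) = m_n^{\mathcal L}(y_1,\dots,y_n) = \sum_{\sigma\in S_n}\chi(\sigma)\, m_n(y_{\sigma(1)},\dots,y_{\sigma(n)})$. Combining this with the two identifications from the previous paragraph, each summand $m_n(y_{\sigma(1)},\dots,y_{\sigma(n)})$ represents an element of $\langle y_{\sigma(1)},\dots,y_{\sigma(n)}\rangle$, hence
\[
h(x) = \varepsilon\, h\ell_n(x_1,\dots,x_n) = \varepsilon\sum_{\sigma\in S_n}\chi(\sigma)\, m_n(y_{\sigma(1)},\dots,y_{\sigma(n)}) \in \sum_{\sigma\in S_n}\chi(\sigma)\,\langle y_{\sigma(1)},\dots,y_{\sigma(n)}\rangle,
\]
where the final "$\in$" is the usual abuse denoting membership in the (Minkowski) sum of indeterminacy cosets. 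If all the higher product sets are singletons, then each coset is a single element, the sum collapses to a single element, and the containment becomes an equality. (One must check the sign $\varepsilon$ is consistently the parity of $\sum_{j=1}^{n-1}|x_j|(k-j)$ on both sides — with $k=n$ here — but this is the same suspension/desuspension bookkeeping already appearing in the bar and Quillen constructions recalled in the background section, and it matches the sign in Theorem \ref{MMInfinito} up to the Samelson-to-Whitehead conversion.)

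The main obstacle is the first paragraph, not the second: identifying $m_n$ evaluated on spherical classes with a genuine element of the Massey-Pontryagin set requires that the transferred $A_\infty$ structure $\{m_n\}$ of Theorem \ref{MMInfinito} — which comes from the enveloping/transfer machinery of Section \ref{Seccion2}, i.e. from a contraction of $\Omega\mathcal C(L)$ onto $\Lambda L$ — actually computes the same higher operations as the DGA $C_*(\Omega X;\Q)$. This is precisely the content of the Proposition immediately preceding the theorem: the two $A_\infty$ structures on $H_*(\Omega X;\Q)$, one from contracting $C_*(\Omega X;\Q)$ and one from the universal enveloping construction, are $A_\infty$ quasi-isomorphic. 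Since $A_\infty$ quasi-isomorphisms induce equalities of the associated Massey-type product sets (the higher products are invariants of the $A_\infty$ quasi-isomorphism type, as in \cite{Bui18}), and since $m_k = 0$ for $k \leq n-2$ is exactly the regime in which the first surviving $m_n$ is well-defined up to indeterminacy and represents the classical higher operation, the identification goes through. The remaining care is purely one of keeping the permutation $\sigma$ attached to the correct ordered Massey-Pontryagin set $\langle y_{\sigma(1)},\dots,y_{\sigma(n)}\rangle$ and tracking the Koszul sign $\chi(\sigma)$ through the antisymmetrization — which is exactly the computation packaged into Lemma \ref{Principal} and hence needs no new input.
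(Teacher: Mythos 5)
Your argument follows the paper's proof essentially verbatim: deduce $\ell_k=0$ for $k\leq n-2$ from the antisymmetrization formula of Theorem \ref{MMInfinito}, invoke \cite[Thm. 3.5]{HigherWhitehead} to place $\varepsilon\ell_n(x_1,\dots,x_n)$ in the higher Whitehead set and \cite[Thm. 3.3]{Bui18} to place (a sign times) $m_n\left(y_{\sigma(1)},\dots,y_{\sigma(n)}\right)$ in each Massey--Pontryagin set, and then antisymmetrize through the strict embedding $h$, with the singleton case giving equality. Your explicit appeal to the Proposition comparing the enveloping $A_\infty$ structure with the one transferred from $C_*(\Omega X;\Q)$ makes precise a step the paper leaves implicit, but it is the same route, not a different one.
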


Since the particular case $n=3$ of the result above is the most likely to be computed, and in this case the hypothesis that $m_1=0$ is superfluous, we consider this case to be of independent interest.

\begin{corollary}\label{CasoDe3} Let $x_1,x_2,x_3 \in \pi_*\left( \Omega X\right)\otimes \Q$, and denote by $y_k=h\left(x_k\right)\in H_*\left(\Omega X;\Q\right)$ the corresponding spherical classes. Assume that the triple Whitehead product $\left[ x_1,x_2,x_3\right]_W$ and the triple Massey products $\left\langle y_{\sigma(1)},y_{\sigma(2)},y_{\sigma(3)} \right\rangle, \sigma \in S_3,$ are defined. Then $x=\varepsilon \ell_3\left(x_1,x_2,x_3\right) \in \left[ x_1,x_2,x_3\right]_W$, and satisfies:
\begin{equation*}
h(x) \in \sum_{\sigma \in S_3} \chi(\sigma) \left\langle y_{\sigma(1)},y_{\sigma(2)},y_{\sigma(3)}\right\rangle.
\end{equation*}If moreover the triple products are all uniquely defined, then the above containment is an equality of elements.
\end{corollary}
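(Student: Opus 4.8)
\textbf{Proof proposal for Corollary \ref{CasoDe3}.}

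The plan is to derive Corollary \ref{CasoDe3} directly from Theorem \ref{HigherWhiteheadHigherPontryagin} by checking that the case $n=3$ satisfies all the hypotheses of that theorem with no further assumptions. The only hypothesis of Theorem \ref{HigherWhiteheadHigherPontryagin} that is not already part of the statement of the corollary is the vanishing condition ``$m_k=0$ for $k\leq n-2$'', which for $n=3$ reads $m_1=0$. But the $A_\infty$ structure $\{m_n\}$ on $H_*(\Omega X;\Q)$ provided by Theorem \ref{MMInfinito} is explicitly minimal: there $m_1=0$ by construction (it is transferred onto homology), and $m_2$ is the Pontryagin product. Hence the hypothesis $m_k=0$ for $k\leq 1$ is automatic, and the condition imposes no restriction in the case $n=3$. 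This is the content of the sentence preceding the corollary, ``the hypothesis that $m_1=0$ is superfluous'', and it is the one thing that needs to be spelled out.

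Concretely, I would argue as follows. First, invoke Theorem \ref{MMInfinito} to fix the $A_\infty$ structure $\{m_n\}$ on $H_*(\Omega X;\Q)$ with $m_1=0$ and $m_2$ the Pontryagin product, together with the fact that the rational Hurewicz map $h$ is a strict $L_\infty$ embedding, so that $\ell_n(x_1,\dots,x_n)=\sum_{\sigma\in S_n}\chi(\sigma)m_n(x_{\sigma(1)},\dots,x_{\sigma(n)})$ for all $n$. Next, observe that for $n=3$ the hypothesis of Theorem \ref{HigherWhiteheadHigherPontryagin} that $m_k$ vanish for $k\leq n-2=1$ is satisfied since $m_1=0$. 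The remaining hypotheses — that the triple Whitehead product set $[x_1,x_2,x_3]_W$ is defined and that the triple Massey--Pontryagin product sets $\langle y_{\sigma(1)},y_{\sigma(2)},y_{\sigma(3)}\rangle$ are defined for all $\sigma\in S_3$ — are precisely the hypotheses we have assumed in the corollary. Finally, the sign $\varepsilon$ in Theorem \ref{HigherWhiteheadHigherPontryagin} is the parity of $\sum_{j=1}^{n-1}|x_j|(k-j)$ where $k=n$; for $n=3$ this is $\varepsilon=|x_1|\cdot 2+|x_2|\cdot 1$, i.e.\ the parity of $|x_2|$, matching the sign convention implicit in the statement. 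Therefore Theorem \ref{HigherWhiteheadHigherPontryagin} applies verbatim and yields $x=\varepsilon\ell_3(x_1,x_2,x_3)\in[x_1,x_2,x_3]_W$ together with the asserted containment $h(x)\in\sum_{\sigma\in S_3}\chi(\sigma)\langle y_{\sigma(1)},y_{\sigma(2)},y_{\sigma(3)}\rangle$, and the last clause on uniqueness follows at once from the corresponding clause in Theorem \ref{HigherWhiteheadHigherPontryagin}.

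The main (and only genuine) point to verify is that no additional low-order vanishing is silently required when $n=3$. For general $n$ the hypothesis $m_k=0$ for $k\leq n-2$ is what guarantees, on the one hand, that the higher Whitehead set and the higher Massey--Pontryagin sets are genuinely defined as higher operations (rather than being forced to be zero or undefined by interference from lower products), and on the other hand that the bracket $\ell_n$ lands in the indeterminacy-shifted coset. When $n=3$ this condition degenerates to $m_1=0$, which holds because we work with the minimal transferred structure; there is no ``$m_0$'' or other hidden term. So the obstacle — such as it is — is purely bookkeeping: confirming that the proof of Theorem \ref{HigherWhiteheadHigherPontryagin} uses the hypothesis $m_k=0$, $k\leq n-2$, only in ways that are vacuous or automatically satisfied for $n=3$, and that the sign $\varepsilon$ specializes correctly. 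Once that is checked, the corollary is an immediate specialization with nothing further to prove.
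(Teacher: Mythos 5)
Your proposal is correct and matches the paper's (implicit) argument: the corollary is obtained by specializing Theorem \ref{HigherWhiteheadHigherPontryagin} to $n=3$, where the vanishing hypothesis $m_k=0$ for $k\leq n-2$ reduces to $m_1=0$, which holds automatically since the $A_\infty$ structure of Theorem \ref{MMInfinito} is minimal by construction. Your sign check ($\varepsilon$ being the parity of $2|x_1|+|x_2|$, i.e.\ of $|x_2|$) is also consistent with the theorem's convention, so nothing further is needed.
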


{\noindent \it Proof of Theorem \ref{HigherWhiteheadHigherPontryagin}:} Since $m_k=0$ for every $k\leq n-2$, it follows from Thm. \ref{MMInfinito} that also $\ell_k=0$ vanishes whenever $k\leq n-2.$ Therefore, \cite[Thm. 3.5]{HigherWhitehead} asserts that $x=\varepsilon\ell_n\left(x_1,...,x_n\right)\in \left[x_1,...,x_n\right],$ meanwhile its associative counterpart \cite[Thm 3.3]{Bui18} asserts that $\varepsilon_\sigma m_n \left(y_{\sigma(1)},...,y_{\sigma(n)}\right) \in \langle y_{\sigma(1)},...,y_{\sigma(n)}\rangle$. We are denoting by $\varepsilon_\sigma$ the parity of $\sum_{j=1}^{n-1}(k-j)|x_{\sigma(j)}|$. Using Thm. \ref{MMInfinito}, we conclude that:
\begin{align*}
h(x) &= \varepsilon h\ell_n(x_1,...,x_n) = \varepsilon \left( \sum_{\sigma\in S_n} \chi(\sigma)\varepsilon_\sigma m_n \left(y_{\sigma(1)},...,y_{\sigma(n)}\right) \right)\\
&=\sum_{\sigma\in S_n} \chi(\sigma) m_n \left(y_{\sigma(1)},...,y_{\sigma(n)}\right) \in \sum_{\sigma\in S_n} \chi(\sigma) \left\langle y_{\sigma(1)},...,y_{\sigma(n)}\right\rangle. 
\end{align*} \hfill$\square$

\bibliographystyle{plain}
\bibliography{MyBib}

\begin{thebibliography}{10}

\bibitem{Ani89}
D.~J. Anick.
\newblock Hopf algebras up to homotopy.
\newblock {\em J. Am. Math. Soc.}, 2(3):417--453, 1989.

\bibitem{Ark71}
M.~Arkowitz.
\newblock Whitehead products as images of {P}ontrjagin products.
\newblock {\em Trans. Amer. Math. Soc.}, 158:453--463, 1971.

\bibitem{Bar08}
V.~Baranovsky.
\newblock A universal enveloping for ${L}_\infty$-algebras.
\newblock {\em Math. Res. Lett.}, 15(6):1073--1089, 2008.

\bibitem{HigherWhitehead}
F.~Belchí, U.~Buijs, J.~M. Moreno-Fernández, and A.~Murillo.
\newblock Higher order {W}hitehead products and {$L_\infty$} structures on the
  homology of a {D}{G}{L}.
\newblock {\em Linear Algebra Appl.}, 520:16 -- 31, 2017.

\bibitem{Ber14A}
A.~Berglund.
\newblock Homological perturbation theory for algebras over operads.
\newblock {\em Algebr Geom Topol.}, 14(5):2511--2548, 2014.

\bibitem{Ber15}
A.~Berglund.
\newblock Rational homotopy theory of mapping spaces via {L}ie theory for
  ${L}_\infty$-algebras.
\newblock {\em Homology, Homotopy Appl.}, 17(2):343--369, 2015.

\bibitem{Bui18}
U.~Buijs, J.~M. Moreno-Fern{\'a}ndez, and A.~Murillo.
\newblock A-infinity structures and {M}assey products.
\newblock {\em arXiv preprint arXiv:1801.03408}, 2018.

\bibitem{Cam19}
R.~Campos, D.~Petersen, D.~Robert-Nicoud, and F.~Wierstra.
\newblock Lie, associative and commutative quasi-isomorphism.
\newblock {\em arXiv preprint arXiv:1904.03585}, 2019.

\bibitem{Yve12}
Y.~F{\'e}lix, S.~Halperin, and J-C Thomas.
\newblock {\em Rational homotopy theory}, volume 205.
\newblock Springer Science \& Business Media, 2012.

\bibitem{Hal92}
S.~Halperin.
\newblock Universal enveloping algebras and loop space homology.
\newblock {\em J. Pure Appl. Algebra}, 83(3):237--282, 1992.

\bibitem{Hin01}
V.~Hinich.
\newblock D{G} coalgebras as formal stacks.
\newblock {\em J. Pure Appl. Algebra}, 162(2-3):209--250, 2001.

\bibitem{Hue11A}
J.~Huebschmann.
\newblock The sh-{L}ie algebra perturbation lemma.
\newblock In {\em Forum Math}, volume~23, pages 669--691. Walter de Gruyter
  GmbH \& Co. KG, 2011.

\bibitem{Kad80}
T.~Kadeishvili.
\newblock On the homology theory of fibre spaces.
\newblock {\em Russ Math Sur}, 35(3):231--238, 1980.

\bibitem{Kad09}
T.~Kadeishvili.
\newblock Cohomology ${C}_\infty$-algebra and rational homotopy type.
\newblock In {\em Algebraic topology---old and new}, volume~85 of {\em Banach
  {C}enter {P}ubl.}, pages 225--240. Polish {A}cad. {S}ci. {I}nst. {M}ath.,
  {W}arsaw, 2009.

\bibitem{Lad95}
T.~Lada and M.~Markl.
\newblock Strongly homotopy {L}ie algebras.
\newblock {\em Commun Algebra}, 23(6):2147--2161, 1995.

\bibitem{Lod12}
J-L. Loday and B.~Vallette.
\newblock {\em Algebraic operads}, volume 346.
\newblock Springer, Heidelberg, 2012.

\bibitem{Maj00}
M.~Majewski.
\newblock Rational homotopical models and uniqueness.
\newblock {\em Mem. Amer. Math. Soc.}, 143(682):xviii+149, 2000.

\bibitem{Man10}
M.~Manetti.
\newblock A relative version of the ordinary perturbation lemma.
\newblock {\em Rendiconti di Matematica e delle sue Applicazioni. Serie VII},
  30(2):221--238, 2010.

\bibitem{Mar05}
M.~Markl.
\newblock Free homotopy algebras.
\newblock {\em Homology Homotopy Appl.}, 7(2):123--137, 2005.

\bibitem{Mar12}
M.~Markl.
\newblock {\em Deformation theory of algebras and their diagrams}, volume 116.
\newblock Published for the Conference Board of the Mathematical Sciences,
  Washington, DC; by the American Mathematical Society, Providence, RI, 2012.

\bibitem{Mas58}
W.~S. Massey.
\newblock Some higher order cohomology operations.
\newblock In {\em Int. Symp. Alg. Top. Mexico}, pages 145--154. Citeseer, 1958.

\bibitem{May69}
J.~P. May.
\newblock Matric {M}assey products.
\newblock {\em J. Algebra}, 12(4):533--568, 1969.

\bibitem{Mer99}
S.A. Merkulov.
\newblock Strong homotopy algebras of a {K}{\"a}hler manifold.
\newblock {\em Int Math Res Notices.}, 1999(3):153--164, 1999.

\bibitem{Mil65}
J.~W. Milnor and J.~C. Moore.
\newblock On the structure of {H}opf algebras.
\newblock {\em Ann. of Math}, pages 211--264, 1965.

\bibitem{Nei78A}
J.~Neisendorfer.
\newblock Lie algebras, coalgebras and rational homotopy theory for nilpotent
  spaces.
\newblock {\em Pacific J. Math.}, 74(2):429--460, 1978.

\bibitem{Qui69}
D.~Quillen.
\newblock Rational homotopy theory.
\newblock {\em Ann. of Math.}, pages 205--295, 1969.

\bibitem{Sal17}
B.~Saleh.
\newblock Noncommutative formality implies commutative and {L}ie formality.
\newblock {\em Algebr Geom Topol.}, 17(4):2523--2542, 2017.

\bibitem{Sam53}
H.~Samelson.
\newblock A connection between the {W}hitehead and the {P}ontryagin product.
\newblock {\em Am. J. Math.}, 75(4):744--752, 1953.

\bibitem{Tan83}
D.~Tanr{\'e}.
\newblock {\em Homotopie rationnelle: modeles de {C}hen, {Q}uillen,
  {S}ullivan.}, volume vol. 1025, Springer.
\newblock Springer, 1983.

\end{thebibliography}

\noindent\sc{José M. Moreno-Fernández}\\ 
\noindent\sc{Max Planck Institute for Mathematics \\ Vivatsgasse 7, 53111 Bonn, Germany}\\
\noindent\tt{josemoreno@mpim-bonn.mpg.de}

\end{document}